\newcommand{\R}{{\mathbb R}}
\newcommand{\C}{{\mathbb C}}
\renewcommand{\P}{{\mathbb P}}
\newcommand{\bZ}{\mathbb{Z}}
\newcommand{\bR}{\mathbb{R}}
\newcommand{\bC}{\mathbb{C}}
\newcommand{\M}{{\mathcal M}}
\newcommand{\cM}{\mathcal{M}}
\newcommand{\cN}{\mathcal{N}}
\renewcommand{\k}{{\mathfrak k}}
\newcommand{\su}{{\mathfrak{su}}}
\newcommand{\fs}{\mathfrak{s}}
\newcommand{\ft}{\mathfrak{t}}
\newcommand{\fI}{\mathfrak{I}}
\newcommand{\Si}{{\Sigma}}
\renewcommand{\o}{\operatorname}
\newcommand{\SU}{\o{SU}}
\newcommand{\U}{\o{U}}
\renewcommand{\H}{\o{H}}
\newcommand{\tr}{\o{tr}}
\renewcommand{\i}{ {\scriptscriptstyle\sqrt{-1}}\, }
\newcommand{\Hom}{\o{Hom}}
\newcommand{\la}{\langle}
\newcommand{\ra}{\rangle}
\newcommand{\lto}{{\longrightarrow}}
\newtheorem{thm}{Theorem}[section]
\newtheorem{lm}[thm]{Lemma}
\newtheorem{pro}[thm]{Proposition}
\newtheorem{prop}[thm]{Proposition}
\newtheorem{rem}[thm]{Remark}
\newtheorem{defin}[thm]{Definition}
\title{The $SU(2)$-character variety of the closed surface of genus 2}
\subjclass[2000]{53D30}
\author{Nan-Kuo Ho}
\address{Department of Mathematics, National Tsing Hua University, Hsinchu 300, and National Center for Theoretical Sciences, Taipei 106, Taiwan}
\email{nankuo@math.nthu.edu.tw}
\author{Lisa C. Jeffrey}
\address{Department of Mathematics, University of Toronto, Toronto, Canada}
\email{jeffrey@math.toronto.edu}
\author{Khoa Dang Nguyen}
\address{Department of Mathematics, University of Michigan, Ann Arbor, MI, USA}
\author{Eugene Z. Xia}
\address{Department of Mathematics, National Cheng Kung University, Tainan 70101, Taiwan}
\email{ezxia@ncku.edu.tw}
\thanks{}
\begin{document}

\begin{abstract}
We study the symplectic geometry of the $\SU(2)$-representation variety of the compact oriented surface of genus 2. We use the Goldman flows to identify subsets of the moduli space with corresponding subsets of $\P^3(\bC)$. We also define and study two antisymplectic involutions on the moduli space and their fixed point sets.
\end{abstract}

\maketitle
\section{Introduction}

Let $\Sigma$ be a compact orientable surface and $K$ a compact Lie group.  From these two ingredients comes the character variety: the moduli space of conjugacy classes of representations of the fundamental group of $\Sigma$ into $K$.  If $\Sigma$ is provided with a smooth structure, then we obtain the moduli space of gauge equivalence classes of flat $K$-connections on $\Sigma$.  If in addition $\Sigma$ is provided with a complex structure $J$, i.e. $(\Sigma, J)$ is a Riemann surface, then we obtain the moduli space of isomorphism classes of semi-stable holomorphic vector bundles on $\Sigma$.  These three objects play central roles in symplectic, differential and K\"ahler geometry, respectively.
With suitable restrictions, these are homeomorphic as topological spaces.

%
%
This paper focuses on the character variety when $\Sigma$ is the compact oriented surface of genus $2$ and $K=\SU(2)$. Let $\pi_1(\Sigma)$ be the fundamental group of $\Sigma$ and $\Hom(\pi_1(\Sigma), K)$ be the space of homomorphisms from $\pi_1(\Sigma)$ to $K$. The representation variety $\Hom(\pi_1(\Sigma), K)$ inherits a topology from $K$ and $K$ acts on $\Hom(\pi_1(\Sigma), K)$ by equivalence (conjugation) of representations.  The character variety is the quotient
$$
\M = \Hom(\pi_1(\Sigma),K)/K.
$$
If $\rho \in \Hom(\pi_1(\Sigma), K)$, we denote its image in $\M$ as $[\rho]$.
$\M$ contains a Zariski open set of irreducible $\pi_1(\Sigma)$-representation classes and we denote this open set $\M^i$.  

$\M$ is homeomorphic  to the moduli space of semi-stable vector bundles on $(\Sigma, J)$ \cite{NS65}.  This interpretation provides a complex structure to $\M$ which depends on the choice of $J$.  Then ([Theorem 2, \cite{NR69}])
\begin{thm}[Narasimhan-Ramanan]\label{thm:NR}
The moduli space of S-equivalence classes of semi-stable vector bundles of rank 2 with trivial determinant on $\Sigma$ is isomorphic to $\P^3(\C)$.
\end{thm}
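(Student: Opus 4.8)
The plan is to construct the classical \emph{theta map} and prove it is an isomorphism onto $\P^3(\C)$. Write $X = (\Sigma, J)$ for the Riemann surface and let $\operatorname{Pic}^1(X)$ denote the component of the Picard variety of line bundles of degree $g-1 = 1$, with theta divisor $\Theta \subset \operatorname{Pic}^1(X)$ consisting of the effective classes. Given a semistable rank-$2$ bundle $E$ with $\det E \cong \mathcal{O}_X$, I would set
$$
\theta_E = \{\, L \in \operatorname{Pic}^1(X) : \H^0(X, E \otimes L) \neq 0 \,\}.
$$
Since $\deg(E \otimes L) = 2$ and $\operatorname{rank}(E \otimes L) = 2$, Riemann--Roch gives $\chi(E \otimes L) = 0$, so for generic $L$ one expects $\H^0(E \otimes L) = 0$ and $\theta_E$ to be a divisor. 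First I would verify that $\theta_E$ is a genuine divisor (and not all of $\operatorname{Pic}^1(X)$) for every semistable $E$; this is where semistability enters.

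The next step is to identify $\theta_E$ as a member of the linear system $|2\Theta|$. For this I would use the determinant-of-cohomology construction: letting $\mathcal{P}$ be a Poincar\'e bundle on $X \times \operatorname{Pic}^1(X)$ and $\pi$ the projection to $\operatorname{Pic}^1(X)$, the line bundle $\det\bigl(R\pi_*(p_X^*E \otimes \mathcal{P})\bigr)^{-1}$ carries a canonical section whose zero locus is precisely $\theta_E$. A Grothendieck--Riemann--Roch computation, using that $\det E$ is trivial, shows this line bundle is isomorphic to $\mathcal{O}(2\Theta)$, so $\theta_E \in |2\Theta|$. Since $\dim |2\Theta| = 2^g - 1 = 3$ for $g = 2$, this yields a morphism
$$
D : \M \lto |2\Theta| \cong \P^3(\C), \qquad E \longmapsto \theta_E,
$$
which is constant on S-equivalence classes because $\theta_E$ depends only on the associated graded bundle.

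To prove $D$ is an isomorphism I would establish bijectivity and then invoke that a bijective morphism from a normal projective variety onto the smooth variety $\P^3(\C)$ is an isomorphism once its differential is everywhere injective. For surjectivity I would exhibit, for each divisor of $|2\Theta|$, a semistable bundle realizing it: the strictly semistable classes $L \oplus L^{-1}$ map to the translate sum $\Theta_L + \Theta_{L^{-1}}$, which traces out the Kummer surface $\operatorname{Jac}(X)/\{\pm 1\}$ embedded in $\P^3$ by $|2\Theta|$, and the stable locus fills in the complement, consistent with the dimension count $\dim \M = 3(g-1) = 3$. For injectivity I would reconstruct $E$ from the divisor $\theta_E$, the incidence data recording how $\theta_E$ meets translates of $\Theta$ determining the bundle up to S-equivalence.

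The main obstacle is the injectivity/reconstruction step. Showing that the theta divisor alone determines the S-equivalence class requires analyzing the universal family and the Hecke-type correspondences relating bundles whose theta divisors share a tangency pattern; the delicate point is the stable bundles, where no splitting is available and one must argue directly that two stable bundles with equal theta divisors are isomorphic. Once bijectivity is established, injectivity of the differential of $D$ (equivalently, that $D$ is unramified) would follow from a deformation-theoretic comparison of the Zariski tangent space $\H^1(X, \operatorname{End}_0 E)$ of $\M$ with that of $|2\Theta|$, completing the identification $\M \cong \P^3(\C)$.
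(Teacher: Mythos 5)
The paper does not prove this statement at all: it is imported verbatim from the literature, with the citation [Theorem 2, \cite{NR69}] built into its statement, and the surrounding text only remarks that the proof is algebro-geometric (mentioning Choi's later topological alternative). So the honest comparison is with Narasimhan--Ramanan's own argument, and your plan is in fact a reconstruction of exactly that argument: the theta map $E \mapsto \theta_E \in |2\Theta| \cong \P^3(\C)$, its well-definedness on S-equivalence classes via the determinant of cohomology, and a final appeal to the fact that a bijective morphism onto a normal variety is an isomorphism. As a roadmap, it is the right one.

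As a proof, however, it has a genuine gap, and you have named it yourself: the injectivity/reconstruction step, which is the actual mathematical content of Narasimhan--Ramanan's paper, is left as an ``obstacle'' accompanied only by vague gestures toward ``Hecke-type correspondences'' and ``tangency patterns.'' The real argument for $g=2$ runs as follows: if $E$ is stable, then $L \in \theta_E$ precisely when $L^{-1}$ embeds in $E$ as a (necessarily maximal, degree $-1$) line subbundle, so $E$ sits in an extension $0 \to L^{-1} \to E \to L \to 0$ for every $L \in \theta_E$; for fixed $L$ the space of such extensions is $\P(\H^1(L^{-2})) \cong \P^2(\C)$, and Narasimhan--Ramanan identify this $\P^2(\C)$ with the hyperplane of divisors in $|2\Theta|$ passing through the point $L$, proving injectivity by analyzing that correspondence. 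None of this is in your sketch. Two further soft spots: your surjectivity plan (``the stable locus fills in the complement'') is circular, since exhibiting a bundle for each divisor off the Kummer surface is as hard as surjectivity itself --- the correct route is that injectivity, plus properness and irreducibility of $\M$ (from GIT) and the dimension count, forces the image to be a closed $3$-dimensional subvariety of $\P^3(\C)$, hence everything, after which Zariski's main theorem in characteristic $0$ finishes without your extra hypothesis on the differential. Finally, the claim that $\theta_E$ is a proper divisor for \emph{every} semistable $E$ also needs the maximal-subbundle argument just quoted: for stable $E$ the locus $\theta_E$ is $\{L : L^{-1} \subset E\}$, which has dimension at most $1 < 2 = \dim \operatorname{Pic}^1$; you flag that ``this is where semistability enters'' but give no argument.
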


The proof of Theorem~\ref{thm:NR} is algebro-geometric.  Later Choi
 provided an alternative proof of the fact that $\M$ is homeomorphic to $\P^3(\C)$ via the moduli space of (singular) flat elliptic structures on $\Sigma$ \cite{Ch11}.

As a variety, $\M$ may be singular,
but $\M$ contains an open dense subset $\M^i$ that has a natural symplectic structure $\omega$ \cite{Go84}.  This character variety perspective gives the most explicit and concrete description of the symplectic structure.  The space $\M$ with its open dense symplectic $\M^i$ offers an interesting example in low dimensional topology and symplectic geometry. In this paper, we study the symplectic geometry of $\M^i$ in the most explicit manner and its implications on $\M$.

In our particular case of $K=\SU(2)$ with genus $g=2$, the character variety is indeed singular, but it is a topological manifold homeomorphic to $\P^3(\C)$. We had hoped to find a proof of this result using symplectic geometry and toric geometry, since the standard moment polytope of $\P^3(\C)$ is the 3-simplex and may be identified with the image of the moduli space under the Goldman flows with suitable modifications
\cite{Go86,JW92,JW94}. This article describes how far we were able to proceed with this
program.

Here is an outline. Section 2 describes the Goldman flows, while Section 3 describes the symplectic structure on the moduli space. Section 4 uses the Goldman flows to identify subsets of the moduli space with subsets of the projective  space.  Anti-symplectic involutions are as important to symplectic geometry as complex conjugation (anti-holomorphic involutions) is to complex geometry.  Section 5 describes two antisymplectic involutions on the moduli space, the first being compatible with the Goldman flows in the Duistermaat sense \cite{Du83} and the other is not compatible but suggests another compatible flow.

\thanks{The first author is partially supported by Ministry of Science and Technology of Taiwan, grant 105-2115-M-007 -006. The second author is partially supported by a grant from NSERC. The third author is partially supported by a UTEA grant of University of Toronto. The fourth author is partially supported by Ministry of Science and Technology of Taiwan, grant 105-2115-M-006-006.}
\section{$\M$ in coordinates}
We begin by describing $\M$ explicitly.  Let $\Sigma$ be the genus 2 closed surface.  Define the commutator operator $[A,B] = ABA^{-1}B^{-1}$.  Then the fundamental group of $\Sigma$ has a presentation:
$$
\pi_1(\Sigma) = \la A_1, B_1, A_2, B_2 \ | \ \prod_{i=1}^2 [A_i, B_i] \ra.
$$
For any representation $\rho\in\Hom(\pi_1(\Sigma),K)$, denote $g_i=\rho(A_i)$, $h_i=\rho(B_i)$, $i=1,2$. The representation space can be realized as
$$
\Hom(\pi_1(\Sigma),K)  = \{(g_1, h_1, g_2, h_2) \in K^4 : \prod_{i=1}^2 [g_i, h_i] = I\}
$$
where $I$ is the identity element of $K$ and the representation variety is $\M = \Hom(\pi_1(\Sigma),K)/K$.
Hence a point in $\M$ is represented by $[(g_1,h_1,g_2,h_2)]$ or simply $[g_1;h_1; g_2; h_2]$.
\begin{defin}
For any representation $\rho\in\Hom(\Gamma,K)$, we say $\rho$ is abelian if its image $\rho(\Gamma)$ in $K$ is abelian.
\end{defin}
For example, $\rho \in \Hom(\pi_1(\Sigma),K)$ is abelian if $\rho=(g_1,h_1,g_2,h_2)$ is abelian, i.e. $g_1,h_1,g_2,h_2$ all commute with each other.

\subsection{The trace and angle functions}
$ $

A function $f : \Hom(\pi_1(\Sigma),K) \lto \C$ descends to a function (which we also name) $f: \M \lto \C$ if and only if $f$ is $K$-conjugation invariant.  Since the trace functions on $K$ are conjugation invariant, $\M$ has trace coordinates (see e.g. \cite{Go97}).  Let $A \in \pi_1(\Sigma)$ and $\rho \in \Hom(\pi_1(\Sigma),K)$.  Then we have the trace function
$$
\tr_A : \M \to \R, \ \ \ \tr_A([\rho]) = \tr(\rho(A)).
$$
For our purposes, we use the modified trace coordinates \cite{JW92}
\begin{equation}\label{eq:trace}
f_A : \M \to \R, \ \ \ f_A([\rho]) = \frac{\cos^{-1}(\tr(\rho(A)/2))}{\pi}.
\end{equation}

\subsection{Free group on two generators}
$ $

Let $F_2 = \la A, B\ra$ be the free group on two generators, which can be understood as the fundamental group of a pair of pants or a three-holed sphere. Consider the representation variety of $F_2$:
$$
 \Hom(F_2, K)/K = K^2/K.
$$
Using the trace functions, we obtain a coordinate system for $\Hom(F_2,K)/K$ \cite{Go09}
$$
\Psi : \Hom(F_2,K)/K \lto \R^{\times 3}, \ \ \ \Psi([\rho]) = (f_A[\rho], f_B[\rho], f_{AB} ([\rho]).
$$
\begin{prop}
$\Psi$ identifies $\Hom(F_2, K)/K $ with the tetrahedron $\tilde\Delta\subset \R^{\times 3}$ having the vertex set
$$
V = \{(0,0,0), (0,1,1), (1,0,1), (1,1,0)\}.
$$
Moreover, $\Psi([\rho]) \in \partial \tilde\Delta$ if and only if $\rho$ is abelian (i.e. if $\rho(A)$ and $\rho(B)$ commute).
\end{prop}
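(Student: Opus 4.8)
The plan is to check that $\Psi$ is a continuous injection with image exactly $\tilde\Delta$: since $\Hom(F_2,K)/K = K^2/K$ is compact and $\R^{\times 3}$ is Hausdorff, a continuous injection is automatically a homeomorphism onto its image, so this suffices. Throughout I write $\alpha = \pi f_A([\rho])$, $\beta = \pi f_B([\rho])$, $\gamma = \pi f_{AB}([\rho])$ for the rotation angles in $[0,\pi]$, so that $\tr\rho(A) = 2\cos\alpha$, etc., and $(f_A, f_B, f_{AB}) = (\alpha, \beta, \gamma)/\pi$. Everything reduces to a single computation of $\tr(gh)$ in terms of the angle between the rotation axes of $g = \rho(A)$ and $h = \rho(B)$.

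For that computation I conjugate so that $g = \cos\alpha\, I + \sin\alpha\, u$ for a fixed unit imaginary quaternion $u$ (a traceless anti-Hermitian matrix with $u^2 = -I$), and write $h = \cos\beta\, I + \sin\beta\, v$ with $v$ a unit imaginary quaternion. Using $\tr(I) = 2$, $\tr(u) = \tr(v) = 0$, and $\tr(uv) = -2\la u, v\ra$, I obtain $\tr(gh) = 2\cos\alpha\cos\beta - 2\sin\alpha\sin\beta\,\la u, v\ra$, so that $\cos\gamma = \cos\alpha\cos\beta - t\,\sin\alpha\sin\beta$, where $t = \la u, v\ra \in [-1,1]$ is the cosine of the angle between the two axes. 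As $t$ runs over $[-1,1]$ the right-hand side sweeps monotonically from $\cos(\alpha - \beta)$ to $\cos(\alpha + \beta)$; hence $\gamma$ ranges over exactly the interval $[\,|\alpha - \beta|,\ \min(\alpha + \beta,\ 2\pi - \alpha - \beta)\,]$, and every value there is realized by a suitable axis $v$.

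Because $\alpha, \beta$ can be prescribed independently in $[0,\pi]$, dividing the constraints $|\alpha - \beta| \le \gamma \le \min(\alpha+\beta, 2\pi - \alpha - \beta)$ by $\pi$ shows that the image of $\Psi$ is exactly $\{(a,b,c) \in [0,1]^3 : a \le b + c,\ b \le a + c,\ c \le a + b,\ a + b + c \le 2\}$. A direct check of the four bounding planes $c = a + b$, $b = a + c$, $a = b + c$, $a + b + c = 2$ shows this region is the tetrahedron with vertex set $V$, i.e. $\tilde\Delta$. For injectivity I run the parametrization backwards: after putting $g$ in the normal form above with $\alpha \in (0,\pi)$, the traces $\tr h$ and $\tr gh$ determine $\beta$ and $t$ uniquely (as $\sin\alpha, \sin\beta > 0$), and the residual stabilizer of $g$ — the circle $\{\exp(su) : s \in \R\}$ — acts transitively on the unit axes $v$ with the prescribed value of $\la u, v\ra$, so $h$, and thus $[\rho]$, is recovered from the three traces. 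The degenerate cases $g = \pm I$ or $h = \pm I$ are immediate.

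Finally, in the main computation the extreme values of $\gamma$ (the endpoints of its interval) occur precisely when $t = \pm 1$, i.e. when $v = \pm u$, i.e. when $g$ and $h$ share an axis and therefore commute — exactly the abelian representations. On the other hand $\Psi([\rho]) \in \partial\tilde\Delta$ means one of the four defining inequalities holds with equality, which is precisely the statement that $\gamma$ sits at an endpoint of its interval; combined with the injectivity just established this gives $\Psi([\rho]) \in \partial\tilde\Delta$ if and only if $\rho$ is abelian. I expect the only real care to be needed at the reducible strata — where $g$ or $h$ equals $\pm I$, or the two axes coincide, so that the stabilizer jumps and $K^2/K$ fails to be a manifold — both for the uniform injectivity statement and for making the boundary equivalence an honest ``if and only if''; the rest is the single trace identity above together with elementary trigonometry.
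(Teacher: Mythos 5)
Your argument is correct. Note, though, that the paper does not actually prove this proposition: it simply cites [Prop.~3.1, JW92] and [Section 4, Go09], so any self-contained argument is ``different'' from the paper's proof in the trivial sense. What you have written is essentially the standard argument that lives in those references: the quaternionic normal form $g=\cos\alpha\, I+\sin\alpha\, u$, the identity $\tr(gh)=2\cos\alpha\cos\beta-2\sin\alpha\sin\beta\,\la u,v\ra$, and the resulting ``$\SU(2)$ triangle inequalities'' $|\alpha-\beta|\le\gamma\le\min(\alpha+\beta,\,2\pi-\alpha-\beta)$ cutting out the tetrahedron with vertex set $V$. The computation, the surjectivity, the injectivity via the residual $\U(1)$-stabilizer acting transitively on $\{v:\la u,v\ra=t\}$, and the identification of the boundary with $t=\pm1$ (i.e.\ $v=\pm u$, i.e.\ commuting images) are all sound. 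Two small remarks: (i) the appeal to injectivity in the last paragraph is not really needed for the boundary equivalence --- given $[\rho]$ with $\sin\alpha\sin\beta\neq 0$, each of the four boundary equalities directly forces $t=\pm1$ and hence commutativity, while $\sin\alpha\sin\beta=0$ means $g$ or $h$ is central and $\rho$ is abelian outright; (ii) you correctly flag that the degenerate strata ($g$ or $h$ in $\{\pm I\}$) need separate words, and for completeness you should record that these points do land in $\partial\tilde\Delta$ (e.g.\ $\alpha=0$ gives $c=b$, which lies on the face $b=a+c$), so the ``abelian $\Rightarrow$ boundary'' direction also covers them.
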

\begin{proof}
See [Prop. 3.1, \cite{JW92}] and [Section 4, \cite{Go09}].
\end{proof}
We denote by $S$ and $L$ the sets of the interiors of the faces and edges of $\tilde\Delta$, respectively. When there is no confusion, we will use the notations $\tilde\Delta$ and $K^2/K$ interchangeably.

\section{The symplectic structure on $\M^i$}\label{sec:symp}

The group $K$ is compact and acts linearly on $\C^2$ by definition. Let $\k = \su(2)$ be its Lie algebra and $B$ be the Killing form on $\k$.  Denote  $T=\U(1)$.  Then $T$ is also the (diagonal) maximal torus of $K$.
Then there is an adjoint $K$-action on $\k$.

\begin{defin}
$$\M^i = \{\rho \in \Hom(\pi_1(\Sigma), K) : \rho \text{ is not abelian }\}/K.$$
\end{defin}
Recall that a representation is called irreducible if its stabilizer has minimal dimension. Since $\SU(2)$ admits property CI (ref: \cite{S}), $\rho\in\cM^i$ iff $\rho$ is an irreducible representation, hence the notation $\cM^i$.

\begin{prop}
$\M \setminus \M^i = T^4/W$,
where $W = N(T)$ is the Weyl group of $K$.
\end{prop}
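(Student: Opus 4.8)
The plan is to identify $\M\setminus\M^i$ with the set of conjugacy classes of \emph{abelian} representations and then reduce everything to a single maximal torus. First I would observe that the defining relation $\prod_{i=1}^2[g_i,h_i]=I$ is satisfied automatically when $g_1,h_1,g_2,h_2$ pairwise commute, since then each commutator is $I$. Hence the abelian representations are precisely the mutually commuting $4$-tuples in $K^4$, and by the definition of $\M^i$ these are exactly the classes making up $\M\setminus\M^i$ after passing to the quotient by $K$.

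The structural fact I would exploit is that in $K=\SU(2)$ any collection of pairwise commuting elements lies in a common maximal torus: the centralizer $Z_K(g)$ of a non-central $g$ is precisely the (unique) maximal torus containing $g$, so if a commuting tuple has a non-central entry $g$ then the remaining entries lie in $Z_K(g)=T_g$, while if every entry is central ($\pm I$) the tuple lies in every maximal torus. Since all maximal tori in $K$ are conjugate, after conjugating we may assume $g_1,h_1,g_2,h_2\in T$. This shows the natural map $T^4\to\M\setminus\M^i$ (inclusion as abelian representations followed by the quotient map) is surjective. Because conjugation by $T$ fixes $T$ pointwise, the $N(T)$-action on $T^4$ factors through $W$ (which acts by simultaneous inversion), so I get a continuous surjection $T^4/W\to\M\setminus\M^i$.

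For injectivity, suppose $k\in K$ conjugates a tuple $(t_1,t_2,t_3,t_4)\in T^4$ to another $(s_1,s_2,s_3,s_4)\in T^4$. If some $t_j$ is non-central, then $s_j=kt_jk^{-1}$ is non-central as well, and comparing centralizers gives $T=Z_K(s_j)=k\,Z_K(t_j)\,k^{-1}=kTk^{-1}$, so $k\in N(T)$; hence the two tuples lie in one $W$-orbit. If every $t_j$ is central, then $s_j=t_j$ for all $j$ and both tuples are $W$-fixed, so they again coincide in $T^4/W$. Thus $T^4/W\to\M\setminus\M^i$ is a continuous bijection.

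Finally I would upgrade this to a homeomorphism by a compactness argument: $\M$ is compact Hausdorff, being the quotient of the compact space $\Hom(\pi_1(\Sigma),K)\subset K^4$ by the compact group $K$, and $T^4/W$ is compact, so a continuous bijection onto the Hausdorff subspace $\M\setminus\M^i$ is automatically a homeomorphism. The step I expect to demand the most care is the injectivity argument, specifically the bookkeeping around entries equal to $\pm I$: the clean centralizer computation applies verbatim only when a non-central entry is present, so the all-central and mixed cases must be isolated and checked separately to confirm that no two distinct $W$-orbits get identified.
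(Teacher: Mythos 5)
Your proof is correct and rests on the same key fact as the paper's: pairwise commuting elements of $\SU(2)$ lie in a common maximal torus, and all maximal tori are conjugate, so every abelian class is represented by a tuple in $T^4$. The paper's proof consists of essentially only this observation, so your additional verifications (injectivity via the centralizer computation $Z_K(t_j)=T$ for non-central entries, with the all-central case handled separately, and the compactness argument upgrading the continuous bijection $T^4/W\to\M\setminus\M^i$ to a homeomorphism) simply supply details the paper leaves implicit.
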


\begin{proof}
The space $\M \setminus \M^i$ corresponds to abelian representations. In other words, if $[\rho] = [g_1;h_1;g_2;h_2]  \in \M \setminus \M^i$, then $g_1, h_1, g_2, h_2$ all commute with each other,
so they belong to the same  maximal torus. \end{proof}

\begin{prop}\label{prop:Mi_dense}
$\M^i$ is dense in $\M$.
\end{prop}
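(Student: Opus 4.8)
The plan is to prove the stronger statement that the non-abelian (equivalently, by the discussion above, irreducible) representations are dense in $\Hom(\pi_1(\Sigma),K)$, and then transport this to $\M$. The quotient map $q\colon \Hom(\pi_1(\Sigma),K)\lto\M$ is continuous and surjective, so it carries dense subsets to dense subsets; since $q$ sends the non-abelian representations onto $\M^i$ and the abelian ones onto $\M\setminus\M^i$, density of the non-abelian locus upstairs yields the proposition. So fix an abelian $\rho_0=(a_1,b_1,a_2,b_2)$; by the previous proposition its four entries lie in a common maximal torus, which after conjugation (harmless, since conjugation is a homeomorphism fixing $[\rho_0]$) we may take to be $T$, with Lie algebra $\ft\subset\k$. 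I will produce non-abelian representations arbitrarily close to $\rho_0$, distinguishing two cases according to whether each handle carries a non-central element.

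Main case: each of the pairs $(a_1,b_1)$ and $(a_2,b_2)$ contains an element $\ne \pm I$. Here I would leave the first pair fixed and conjugate only the second by $u=\exp(\epsilon Z)$ with $0\ne Z\in\ft^{\perp}$, setting $\rho_\epsilon=(a_1,b_1,ua_2u^{-1},ub_2u^{-1})$. The relation is preserved, because $[ua_2u^{-1},ub_2u^{-1}]=u[a_2,b_2]u^{-1}=I$, so $\rho_\epsilon\in\Hom(\pi_1(\Sigma),K)$, and clearly $\rho_\epsilon\to\rho_0$ as $\epsilon\to0$. It remains to see that $\rho_\epsilon$ is non-abelian for small $\epsilon\ne0$: the first pair lies in $T$ and the second in $T'=uTu^{-1}$, and since $Z\notin\ft$ we have $T'\ne T$ once $\epsilon\ne 0$ is small; as two non-central elements of $\SU(2)$ commute only when they share a maximal torus, a non-central entry of the first pair fails to commute with a non-central entry of the second. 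This exhibits the desired nearby non-abelian representations and never invokes the commutator map at a degenerate point.

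Remaining case: at least one handle, say the second, has $a_2,b_2\in\{\pm I\}$. Now conjugation is useless, and instead I would match commutators: choose a small $c\ne I$ and solve $[g_1,h_1]=c$ with $(g_1,h_1)$ near $(a_1,b_1)$ and $[g_2,h_2]=c^{-1}$ with $(g_2,h_2)$ near $(a_2,b_2)$, so that $\prod_i[g_i,h_i]=I$ while $[g_1,h_1]=c\ne I$ forces non-abelianness. The enabling fact is a local surjectivity statement for the commutator map $\mu(g,h)=[g,h]$: near a central pair $(\pm I,\pm I)$ the image of $\mu$ fills a full neighborhood of $I$, whereas near a commuting pair containing a non-central element the image is only a surface through $I$ tangent to $\ft^{\perp}$ at $I$. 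Since the central second handle supplies an open neighborhood of $I$, any sufficiently small $c$ realized by the first handle has its inverse $c^{-1}$ realized by the second; this includes the subcase where both handles are central (the representations into the center), in which both handles supply open sets.

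The main obstacle is precisely this last local surjectivity. The trouble is that commuting pairs are critical points of $\mu$: a direct computation shows that at a central pair the differential of $\mu$ vanishes identically, and at a general commuting pair in $T$ its image lies in $\ft^{\perp}$ so its rank is at most $\dim\ft^{\perp}=2<\dim\k$. Hence the implicit function theorem cannot be applied directly to solve $[g,h]=c$. The conjugation construction sidesteps this entirely in the main case, which is why I separate the cases; but in the remaining case one must establish genuine (not merely infinitesimal) surjectivity of $\mu$ onto a neighborhood of $I$ near the center. I expect to do this by exhibiting an explicit two-parameter family of commutators together with a degree or fixed-point argument, or by reducing to the well-known fact that every element of $\SU(2)$ is a commutator and controlling sizes so that small targets are attained by $g,h$ close to the identity. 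Verifying this carefully is the crux of the argument.
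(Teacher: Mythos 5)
Your main case is sound, and it is essentially the paper's own construction: fix one handle, conjugate the other by $u=\exp(\epsilon Z)$ with $Z\in\ft^{\perp}$, and use the fact that two non-central elements of $\SU(2)$ commute only if they share a maximal torus. The gap is in your remaining case, where one handle is central, say $a_2,b_2\in\{\pm I\}$. There you propose to solve $[g_1,h_1]=c$ and $[g_2,h_2]=c^{-1}$ with both pairs near the given (commuting) ones, and this hinges on local surjectivity of the commutator map $(g,h)\mapsto[g,h]$ onto a neighborhood of $I$ near a central pair. As you yourself observe, every commuting pair is a critical point of this map, so the implicit function theorem is unavailable, and the degree/fixed-point argument you allude to is never carried out. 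That statement is in fact true for $\SU(2)$, but it is a genuinely harder fact than the proposition requires, and as written your proof is incomplete exactly at the point you call the crux.

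The missing idea — and the way the paper avoids this case altogether — is a two-step approximation: first perturb \emph{within the abelian locus}. If some entries of an abelian representation are central, replace them by nearby non-central elements of the same maximal torus $T$; the perturbed quadruple still consists of commuting elements, hence still satisfies the relation $\prod_i[g_i,h_i]=I$, and it now lies in your main case. Formally, the paper introduces
$$
Q=\{[g_1;h_1;g_2;h_2]\in T^4/W : g_1,h_1,g_2,h_2\notin\{\pm I\}\},
$$
notes that $Q$ is dense in the abelian locus $T^4/W$ (its complement is a union of lower-dimensional tori modulo $W$), and applies the conjugation trick only to points of $Q$; transitivity of density then gives $\M^i$ dense in $\M$. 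Replacing your remaining case by this one-line reduction makes your argument complete, and then it coincides in substance with the paper's proof; pursuing the commutator-surjectivity route instead would require a separate, nontrivial lemma.
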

\begin{proof}
Let
$$
Q = \{[g_1;h_1;g_2;h_2] \in T^4/W : g_1, g_2, h_1, h_2 \not\in \{\pm I\}\}.
$$
Then $Q$ is dense in $T^4/W$ because $(T^4/W) \setminus Q$ is a union of tori of lower dimensions modulo $W$.
Let  $[\rho] = [g_1;h_1; g_2; h_2] \in Q$.  We may assume that $g_j$ and $h_j$ are diagonal for $j = 1,2$, defining a unique maximal torus $T \subset K$.  Let
$k : [0,1] \to K$ be a continuous path such that $k(0) = I$ and $k(t) \not\in T$ for all $t \neq 0$.
Let
$$\rho_t =  (k(t) g_1k(t)^{-1}, k(t) h_1k(t)^{-1},  g_2, h_2).$$
Then it is immediate that $[\rho_t] \in \M^i$ for $t \neq 0$ and $[\rho_0] = [\rho]$.
Hence $\M^i$ is dense in $\M^i \cup Q$.  Since $Q$ is dense in $T^4/W$ and $\M = \M^i \cup (T^4/W)$, $\M^i$ is dense in $\M$.
\end{proof}

The adjoint $K$-action induces a $\pi_1(\Sigma)$-action on $\k$, making $\k$ a $\pi_1(\Sigma)$-module.
The tangent space to $\M$ at an irreducible representation $[\rho]$ is then the $\pi_1(\Sigma)$-module cohomology $\H^1(\pi_1(\Sigma),\k)$ and :
\begin{thm}[Goldman \cite{Go84}]
Suppose $[\rho] \in \M^i$.  Then there is a perfect anti-commuting pairing
$$
\omega : \H^1(\pi_1(\Sigma),\k) \times \H^1(\pi_1(\Sigma),\k) \lto \H^2(\pi_1(\Sigma),\R)
$$
via Poincar\'e duality on cocycles and $B$ on the coefficient vector space $\k$.
\end{thm}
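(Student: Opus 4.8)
The plan is to realize $\omega$ as the cup product in the cohomology of $\pi_1(\Si)$ composed with contraction by the Killing form $B$, and then to extract antisymmetry from the graded-commutativity of the cup product together with the symmetry of $B$, and perfectness from Poincar\'e duality on the closed oriented surface. Since $\Si$ is a $K(\pi_1(\Si),1)$, the group cohomology $\H^\ast(\pi_1(\Si),M)$ agrees with the cohomology of $\Si$ with coefficients in the local system determined by the $\pi_1(\Si)$-module $M$; in particular $\H^2(\pi_1(\Si),\R)\cong\R$ via the orientation class $[\Si]$. I will use these identifications throughout, with $\k$ made into a $\pi_1(\Si)$-module by $\gamma\cdot\xi=\o{Ad}_{\rho(\gamma)}\xi$.

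First I would define the pairing. The cup product gives
\[
\H^1(\pi_1(\Si),\k)\times\H^1(\pi_1(\Si),\k)\xrightarrow{\ \cup\ }\H^2(\pi_1(\Si),\k\otimes\k).
\]
Because $B$ is $\o{Ad}$-invariant, the contraction $B\colon\k\otimes\k\to\R$ is a map of $\pi_1(\Si)$-modules (the action on $\R$ being trivial), so it induces $B_\ast\colon\H^2(\pi_1(\Si),\k\otimes\k)\to\H^2(\pi_1(\Si),\R)$, and I set $\omega=B_\ast\circ\cup$. On cocycles, writing representatives as crossed homomorphisms $u,v\colon\pi_1(\Si)\to\k$, this assigns to $(u,v)$ the $\R$-valued $2$-cocycle $(\gamma_1,\gamma_2)\mapsto B\bigl(u(\gamma_1),\,\gamma_1\cdot v(\gamma_2)\bigr)$, evaluated against $[\Si]$, which is the ``Poincar\'e duality on cocycles'' of the statement.

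Antisymmetry is then formal: graded-commutativity of the cup product on one-dimensional classes introduces the transposition on the coefficients $\k\otimes\k$ together with the sign $(-1)^{1\cdot1}=-1$, and since $B$ is symmetric it is invariant under that transposition, leaving $\omega(a,b)=-\omega(b,a)$. For perfectness I would invoke Poincar\'e duality with local coefficients on the closed oriented surface $\Si$, which makes the evaluation pairing $\H^1(\Si,\k)\times\H^1(\Si,\k^\ast)\to\H^2(\Si,\R)\cong\R$ nondegenerate. Since $\su(2)$ is semisimple, $B$ is nondegenerate and therefore furnishes an isomorphism $\k\cong\k^\ast$ of $\pi_1(\Si)$-modules; transporting the Poincar\'e pairing across this isomorphism yields exactly $\omega$, which is thus perfect.

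The main obstacle I anticipate is bookkeeping rather than conceptual: making precise at the cochain level the identification of the cup--Killing pairing with the Poincar\'e-duality pairing (including the sign conventions that secure antisymmetry), and confirming that $B_\ast$ is well defined, i.e. that $B$ genuinely descends to a $\pi_1(\Si)$-equivariant contraction. I would also remark that the irreducibility hypothesis $[\rho]\in\M^i$ is what guarantees $\H^0(\pi_1(\Si),\k)=0$ and hence, by duality, $\H^2(\pi_1(\Si),\k)=0$, so that $\H^1(\pi_1(\Si),\k)$ is genuinely the Zariski tangent space to $\M$ at $[\rho]$ on which $\omega$ is a symplectic form; perfectness of $\omega$ itself, however, follows from Poincar\'e duality alone.
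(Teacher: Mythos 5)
Your proof is correct and follows essentially the same route as the source this theorem is quoted from: the paper itself offers no proof, simply attributing the statement to Goldman \cite{Go84}, and Goldman's argument is exactly your construction — the pairing is cup product $\H^1 \times \H^1 \to \H^2(\pi_1(\Sigma),\k\otimes\k)$ contracted by the $\operatorname{Ad}$-invariant nondegenerate form $B$, with antisymmetry from graded-commutativity plus symmetry of $B$, and perfectness from Poincar\'e duality with local coefficients on the closed oriented surface. Your closing remark is also the right reading of the hypothesis: irreducibility ($[\rho]\in\M^i$) is what forces $\H^0(\pi_1(\Sigma),\k)=0$ so that $\H^1$ is the tangent space at a smooth point, whereas perfectness of the pairing itself needs only duality.
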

Since $\Sigma$ is compact, $\H^2(\pi_1(\Sigma),\R) \cong \R$ and $\omega$ is our desired symplectic structure on $\M^i$.

\section{The $T^3$-action}

Let $T^3 = \U(1)^{\times 3}$.
The closed genus two surface may be decomposed into two three-holed spheres, glued along three boundary circles $C_1, C_2, C_3$.
Throughout this paper, we consider the decomposition such that no $C_i$ separates $\Sigma$ into disjoint components, see Figure 1.

\begin{figure}[h]\label{pantsdecomposition}
\scalebox{0.2}{\includegraphics{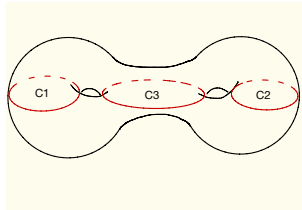}}
\caption{The circles $C_1,C_2,C_3$}
\end{figure}

Denote by $f_i$ the modified trace function associated with $C_i$, i.e. $f_i=f_{C_i}$ and let
$$
\mu : \M \lto \R^{\times 3}, \ \ \ \mu([\rho]) = (f_1([\rho]), f_2([\rho]), f_3([\rho])).
$$
Then $C_i$ corresponds to $[g_1;h_1;g_2;h_2]\in\M$ via $C_1, C_2, C_3$ representing $h_1,h_2,h_1h_2$ respectively.
\begin{prop}
$\mu(\M) = \tilde\Delta$.
\end{prop}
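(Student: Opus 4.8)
The plan is to factor $\mu$ through the free-group moduli space $K^2/K$ and then invoke the Proposition identifying the latter with $\tilde\Delta$. Since $C_1,C_2,C_3$ represent $h_1,h_2,h_1h_2$, the three modified trace functions are $f_1=f_{h_1}$, $f_2=f_{h_2}$, $f_3=f_{h_1h_2}$, and these are precisely the three coordinates of $\Psi$ evaluated at the class $[h_1;h_2]\in K^2/K$. I would therefore introduce the restriction map
$$
q : \M \lto K^2/K, \qquad q([g_1;h_1;g_2;h_2]) = [h_1; h_2],
$$
which is well defined because conjugating a representation by $k\in K$ conjugates $h_1$ and $h_2$ simultaneously. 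Reading off the defining formula for $\Psi$ at $[h_1;h_2]$ gives the factorization $\mu = \Psi \circ q$.

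Given this factorization, the Proposition on $\Hom(F_2,K)/K$ (which asserts that $\Psi$ is a homeomorphism onto $\tilde\Delta$) reduces everything to the surjectivity of $q$. Indeed $\mu(\M)=\Psi(q(\M))$, so the inclusion $\mu(\M)\subseteq\Psi(K^2/K)=\tilde\Delta$ is automatic, and $q(\M)=K^2/K$ would immediately upgrade this to the equality $\mu(\M)=\Psi(K^2/K)=\tilde\Delta$, yielding both inclusions at once.

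It then remains to show that $q$ is onto, i.e. that every pair $(h_1,h_2)\in K^2$ extends to a genus-two representation: one must produce $g_1,g_2\in K$ with $[g_1,h_1][g_2,h_2]=I$. At first glance this looks like it requires controlling the image of the commutator map $g\mapsto[g,h]$ on $\SU(2)$, which is where I would expect the only potential difficulty to lie. The observation that dissolves this obstacle is that $g_1=g_2=I$ already solves the equation, since $[I,h_1][I,h_2]=I$. Thus $\rho=(I,h_1,I,h_2)$ is a genuine point of $\M$ with $q([\rho])=[h_1;h_2]$, and as $(h_1,h_2)$ ranges over all of $K^2$ the classes $[h_1;h_2]$ exhaust $K^2/K$. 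Hence $q$ is surjective and $\mu(\M)=\tilde\Delta$. The conceptual point I would stress is that no irreducibility and no nontrivial solution of the commutator equation is needed here: the defining relation $\prod_i[g_i,h_i]=I$ is met trivially by sending $A_1,A_2$ to the identity, so the entire tetrahedron is already attained within $\M$.
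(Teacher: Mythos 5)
Your proof is correct, and its core idea coincides with the paper's: the paper's proof also observes that the classes $[I;h_1;I;h_2]$ (a set it calls $U$) lie in $\M$ and realize all of $\tilde\Delta$ under $\mu$, which is exactly your remark that $g_1=g_2=I$ trivially solves the commutator relation. The genuine difference is in how the two arguments handle the opposite inclusion $\mu(\M)\subseteq\tilde\Delta$: the paper simply asserts the equality and cites \cite{JW92} for the details, whereas you derive it internally by factoring $\mu=\Psi\circ q$ through the restriction map $q:\M\to K^2/K$, $[g_1;h_1;g_2;h_2]\mapsto[h_1;h_2]$, and then invoking the earlier Proposition that $\Psi$ identifies $K^2/K$ with $\tilde\Delta$. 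This makes your proof self-contained where the paper's is not: both inclusions come from the single factorization, surjectivity of $q$ giving $\supseteq$ and the mere existence of $q$ giving $\subseteq$. The only price is the (easy but worth stating, as you did) verification that $q$ is well defined and that $\mu=\Psi\circ q$ holds under the paper's convention that $C_1,C_2,C_3$ represent $h_1,h_2,h_1h_2$.
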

\begin{proof}
Observe that the subset $U := \{[I;h_1;I;h_2] : h_1, h_2 \in K\} \subset \M$ and $\mu(U) = \tilde\Delta$, so $\mu(\M)\supset \tilde\Delta$. In fact, $\mu(\M)=\tilde\Delta$, see \cite{JW92} for more detail.
\end{proof}
Denote by $\tilde\Delta^\circ$ the interior of $\tilde\Delta$ and $\M^\circ = \mu^{-1}(\tilde\Delta^\circ)$. Then $\mu : \M^\circ \to \R^{\times 3}$ is the moment map of a Hamiltonian $T^3$-action which will be described next.

\subsection{Hamiltonian $T^3$-action}\label{subsec:T3}
$ $

Goldman's flows \cite{Go86} define an $\bR^3$-action on $\M$. Jeffrey-Weitsman \cite{JW92} modified Goldman's moment map to $\mu$, which gives a Hamiltonian $T^3$-action on $\M^\circ$ as follows.
The torus action corresponding to $C_1$ and $C_2$ is
$$T^2:\M^\circ \to\M^\circ ,\quad [g_1;h_1;g_2;h_2]\mapsto [g_1e^{t_1\xi_1};h_1;g_2e^{t_2\xi_2};h_2], $$
where $ t_1,\ t_2\ \in \bR$, $ h_1=e^{\xi_1},\ h_2=e^{\xi_2}$.
The torus action corresponding to $C_3$ is less transparent.

Let $X=h_2h_1-(h_2h_1)^{-1}$ and $Y=h_1h_2-(h_1h_2)^{-1}$, then $X, Y\in \k=\su(2)$. The torus corresponding to $C_3$ acts on $\M^\circ$ as follows:
$$T: K^4 \to K^4,\quad (g_1,h_1,g_2,h_2)=(e^{tX}g_1,h_1,e^{tY}g_2,h_2), \mbox{where}\ t\in \bR,$$
which descends to an action on $\M^\circ$:
\begin{pro}[Goldman \cite{Go86}]
If $[g_1,h_1][g_2,h_2]=I$, then $[e^{tX}g_1,h_1][e^{tY}g_2,h_2]=I$.
\end{pro}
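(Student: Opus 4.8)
The plan is to verify the identity by a direct computation that exploits the special form of $X$ and $Y$. Write $c_1 = [g_1,h_1]$ and $c_2 = [g_2,h_2]$, so that the hypothesis reads $c_1 c_2 = I$. Since $g_1 h_1 g_1^{-1} = c_1 h_1$, expanding the modified commutator and grouping the exponential with $h_1$ gives
$$[e^{tX}g_1, h_1] = e^{tX}\, g_1 h_1 g_1^{-1}\, e^{-tX}\, h_1^{-1} = e^{tX}\, c_1\, \big(h_1 e^{-tX} h_1^{-1}\big),$$
and symmetrically $[e^{tY}g_2, h_2] = e^{tY}\, c_2\, \big(h_2 e^{-tY} h_2^{-1}\big)$. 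Thus the whole problem reduces to understanding the conjugates $h_1 e^{-tX} h_1^{-1}$ and $h_2 e^{-tY} h_2^{-1}$.

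The key step, and the reason the particular vectors $X,Y \in \su(2)$ are chosen, is the pair of conjugation identities
$$h_1 X h_1^{-1} = Y, \qquad h_2 Y h_2^{-1} = X.$$
I would establish these straight from the definitions $X = h_2 h_1 - (h_2 h_1)^{-1}$ and $Y = h_1 h_2 - (h_1 h_2)^{-1}$: conjugation by $h_1$ sends $h_2 h_1 \mapsto h_1 h_2$ and $(h_2 h_1)^{-1} \mapsto (h_1 h_2)^{-1}$, giving the first identity, and the second is the same computation with the roles of $h_1,h_2$ interchanged. Because conjugation by an element of $K$ is a Lie group (hence Lie algebra) automorphism, it commutes with the exponential, so $h_1 e^{-tX} h_1^{-1} = e^{-t\, h_1 X h_1^{-1}} = e^{-tY}$ and likewise $h_2 e^{-tY} h_2^{-1} = e^{-tX}$.

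Substituting these back, the two modified commutators become $e^{tX} c_1 e^{-tY}$ and $e^{tY} c_2 e^{-tX}$, and their product telescopes:
$$[e^{tX}g_1, h_1]\,[e^{tY}g_2, h_2] = e^{tX} c_1\, e^{-tY} e^{tY}\, c_2\, e^{-tX} = e^{tX}(c_1 c_2)e^{-tX} = I,$$
where the last equality uses the hypothesis $c_1 c_2 = I$. There is no serious obstacle once the conjugation identities are spotted; the only point deserving care is verifying at the outset that $X$ and $Y$ genuinely lie in $\su(2)$ (so that $e^{tX}, e^{tY} \in K$ and the conjugation-through-exponential step is legitimate), which holds because for $g \in \SU(2)$ the element $g - g^{-1} = g - g^\dagger$ is traceless and anti-Hermitian. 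The engineering of $X,Y$ so that the inner factors $e^{-tY}$ and $e^{tY}$ cancel is precisely what makes the flow preserve the relation.
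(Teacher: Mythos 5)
Your proof is correct, and it rests on the same two intertwining relations that drive the paper's argument, namely $h_1 e^{tX} = e^{tY} h_1$ and $e^{tX} h_2 = h_2 e^{tY}$ (your $h_1 e^{-tX} h_1^{-1} = e^{-tY}$ and $h_2 e^{-tY} h_2^{-1} = e^{-tX}$ are just these relations rearranged). Where you genuinely diverge is in how you establish them: the paper expands $e^{tX}$ and $e^{tY}$ as power series, uses the binomial expansion of $(a - a^{-1})^n$, and reduces everything to the word identities $h_1 (h_2 h_1)^n = (h_1 h_2)^n h_1$ and $(h_2 h_1)^n h_2 = h_2 (h_1 h_2)^n$ for all $n \in \bZ$. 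You instead verify the single Lie-algebra identity $h_1 X h_1^{-1} = Y$ (and its companion $h_2 Y h_2^{-1} = X$) in one line and then invoke the naturality of the exponential under conjugation, $g e^{Z} g^{-1} = e^{g Z g^{-1}}$. Your route is shorter and more conceptual, and it makes transparent why $X$ and $Y$ are defined the way they are; the paper's route is more elementary in that it never needs $X, Y$ to lie in the Lie algebra or $\exp$ to be natural, only formal manipulation of power series of matrices. Your added remark that $X = h_2h_1 - (h_2h_1)^{-1}$ is traceless and anti-Hermitian, so that $e^{tX} \in \SU(2)$, is a point the paper asserts without proof, and it is worth including. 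The final telescoping $e^{tX} c_1 e^{-tY} \cdot e^{tY} c_2 e^{-tX} = e^{tX}(c_1 c_2)e^{-tX} = I$ is the same cancellation the paper performs inline.
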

\begin{proof}
We only need to show that $e^{tX}h_2=h_2e^{tY}$ and $h_1e^{tX}=e^{tY}h_1$. Indeed, if these two equalities are true, then
\begin{eqnarray*}
[e^{tX}g_1,h_1][e^{tY}g_2,h_2]&=&e^{tX}g_1h_1g_1^{-1}e^{-tX}h_1^{-1}e^{tY}g_2h_2g_2^{-1}e^{-tY}h_2^{-1}\\
&=&e^{tX}g_1h_1g_1^{-1}h_1^{-1}g_2h_2g_2^{-1}e^{-tY}h_2^{-1}\\
&=&e^{tX}g_1h_1g_1^{-1}h_1^{-1}g_2h_2g_2^{-1}h_2^{-1}e^{-tX}=I.
\end{eqnarray*}

To show that $h_1e^{tX}=e^{tY}h_1$, recall that $e^{tX}=I+tX+\frac{t^2}{2}X^2+\cdots$ and $e^{tY}=I+tY+\frac{t^2}{2}Y^2+\cdots$, so
\begin{eqnarray}
h_1e^{tX}&=&h_1+th_1(h_2h_1-(h_2h_1)^{-1})+\frac{t^2}{2}h_1(h_2h_1-(h_2h_1)^{-1})^2+\cdots,\label{eq:htx}\\
e^{tY}h_1&=&h_1+t(h_1h_2-(h_1h_2)^{-1})h_1+\frac{t^2}{2}(h_1h_2-(h_1h_2)^{-1})^2h_1+\cdots.\label{eq:tyh}
\end{eqnarray}
Since $(a-a^{-1})^n=a^n-(^n_1)a^{n-2}+(^n_2)a^{n-4}+\cdots\pm a^{-n}$, to check that (\ref{eq:htx})$=$(\ref{eq:tyh}), we only need to show that $h_1(h_2h_1)^n=(h_1h_2)^nh_1$ for all $n\in \bZ$. However, this is indeed true, because
$$h_1(h_2h_1\cdots h_2h_1)=(h_1h_2\cdots h_1h_2)h_1,\quad h_1(h_1^{-1}h_2^{-1}\cdots h_1^{-1}h_2^{-1})=(h_2^{-1}h_1^{-1}\cdots h_2^{-1}h_1^{-1})h_1.$$

Similarly, to show that $e^{tX}h_2=h_2e^{tY}$, we only need to show that $(h_2h_1)^nh_2=h_2(h_1h_2)^n$ for all $n\in \bZ$ and clearly this is also true.
\end{proof}

This $T^3$-action on $\M^\circ$ is Hamiltonian with respect to the symplectic form $\tfrac{-1}{2\pi} \omega$ and has $\mu:\M^\circ\to \bR^{\times 3},\ \mu([\rho]) = (f_1([\rho]), f_2([\rho]), f_3([\rho]))$ as its moment map \cite{JW92}.

\subsection{The relations between $\M^\circ$, $\M^i$, and $\M$}
$ $

\begin{lm}\label{lm:boundary}
Let $\partial \tilde\Delta$ denote the boundary of $\tilde\Delta$, which is the disjoint union $S\cup L\cup V$. Then
$$\M\setminus\M^\circ=\mu^{-1}(\partial \tilde\Delta)=\{[g_1;h_1;g_2;h_2]\in \M : [h_1,h_2]=I\}$$
\end{lm}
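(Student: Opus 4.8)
The plan is to reduce the whole statement to the Proposition describing $\Psi : \Hom(F_2,K)/K \to \tilde\Delta$, by noting that $\mu$ depends on a representation only through the pair $(h_1,h_2)$ and factors through that map. First I would set up the factorization. The functions $f_1,f_2,f_3$ are the modified trace functions of $C_1,C_2,C_3$, which represent $h_1,h_2,h_1h_2$ respectively, so $\mu([g_1;h_1;g_2;h_2])$ is a function of $(h_1,h_2)$ alone. Concretely, define the restriction map $r:\M \to K^2/K = \Hom(F_2,K)/K$ by $[g_1;h_1;g_2;h_2]\mapsto [(h_1,h_2)]$; this is well defined because conjugating the quadruple by $k\in K$ conjugates $(h_1,h_2)$ by the same $k$. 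Taking $A\mapsto h_1$ and $B\mapsto h_2$ (so that $AB\mapsto h_1h_2$) in the definition of $\Psi$, we get $\mu = \Psi\circ r$.

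Next I would dispatch the first equality. By the earlier proposition $\mu(\M)=\tilde\Delta$, hence $\M = \mu^{-1}(\tilde\Delta)$. Since $\tilde\Delta = \tilde\Delta^\circ \sqcup \partial\tilde\Delta$ and $\M^\circ = \mu^{-1}(\tilde\Delta^\circ)$ by definition, taking complements inside $\M$ gives $\M\setminus\M^\circ = \mu^{-1}(\tilde\Delta)\setminus\mu^{-1}(\tilde\Delta^\circ) = \mu^{-1}(\partial\tilde\Delta)$, which is the content of the first displayed identity.

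Finally, the second equality follows directly from the Proposition on $\Psi$, applied to the $F_2$-representation $r([\rho])$ determined by $(h_1,h_2)$. That proposition asserts $\Psi([\rho']) \in \partial\tilde\Delta$ if and only if $\rho'$ is abelian. Therefore $\mu([\rho]) = \Psi(r([\rho])) \in \partial\tilde\Delta$ if and only if $r([\rho])$ is abelian, i.e. $h_1$ and $h_2$ commute, i.e. $[h_1,h_2]=I$. Combining this with the first equality yields $\M\setminus\M^\circ = \{[g_1;h_1;g_2;h_2]\in\M : [h_1,h_2]=I\}$.

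As for the main obstacle, there is essentially no computation here; all the content lies in recognizing the factorization $\mu=\Psi\circ r$ and that the genus-two constraint $\prod_{i=1}^2[g_i,h_i]=I$ is irrelevant to the value of $\mu$. The single point requiring care is that one must not assume $r$ is surjective onto $K^2/K$ (it need not be): the argument only invokes the boundary characterization of $\Psi$ for those classes $r([\rho])$ that actually occur, which is legitimate because the Proposition on $\Psi$ holds pointwise over all of $K^2/K$.
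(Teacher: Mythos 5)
Your proof is correct, but it takes a genuinely different route from the paper: the paper's entire proof of this lemma is a citation ("This follows from \cite{Go84} or \cite{JW92}"), deferring the content to Goldman and Jeffrey--Weitsman, whereas you derive it internally from material already in the paper. Your key observation---that $\mu$ factors as $\Psi\circ r$ through the restriction map $r:[g_1;h_1;g_2;h_2]\mapsto[(h_1,h_2)]\in K^2/K$, so that the genus-two relation $\prod_i[g_i,h_i]=I$ plays no role---is exactly what makes the lemma an immediate corollary of the Proposition identifying $\Hom(F_2,K)/K$ with $\tilde\Delta$ via $\Psi$ (whose boundary characterization, $\Psi([\rho'])\in\partial\tilde\Delta$ iff $\rho'$ is abelian, gives the second equality), combined with $\mu(\M)=\tilde\Delta$ and the definition $\M^\circ=\mu^{-1}(\tilde\Delta^\circ)$ (which give the first). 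The well-definedness of $r$ and your remark that surjectivity of $r$ is never needed are both handled correctly; in fact $r$ is surjective (take $g_1=g_2=I$), but it is good hygiene not to rely on it. What your approach buys is self-containedness: the lemma now rests only on the $\Psi$-Proposition, which is the single external input the paper has already stated, rather than asking the reader to excavate the claim from two research papers. The trade-off is that the cited sources prove the statement in the broader context of the Goldman flow machinery, of which this lemma is a small piece; the paper's citation implicitly points the reader to that context, while your argument isolates just the combinatorial fact needed here.
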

\begin{proof}
This follows from \cite{Go84} or \cite{JW92}.
\end{proof}

\begin{prop}
$$
\M^\circ \subsetneqq \M^i.
$$
\end{prop}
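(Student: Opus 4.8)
The plan is to prove the inclusion $\M^\circ\subseteq\M^i$ directly from the characterization of $\M^\circ$ supplied by Lemma~\ref{lm:boundary}, and then to show that the inclusion is strict by exhibiting a single explicit point of $\M^i\setminus\M^\circ$.

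For the inclusion, recall from Lemma~\ref{lm:boundary} that $\M\setminus\M^\circ=\{[g_1;h_1;g_2;h_2]\in\M:[h_1,h_2]=I\}$, so a class lies in $\M^\circ$ exactly when $[h_1,h_2]\neq I$. If $[\rho]=[g_1;h_1;g_2;h_2]\in\M^\circ$, then $h_1$ and $h_2$ are two elements of the image $\rho(\pi_1(\Sigma))$ that fail to commute, whence $\rho$ is non-abelian and $[\rho]\in\M^i$. Both conditions, ``$\rho$ non-abelian'' and ``$[h_1,h_2]=I$'', are conjugation invariant, so they descend to $\M$ and the argument is well posed at the level of classes.

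For strictness, I would take $h_1=h_2=I$. Then the surface relation $[g_1,h_1][g_2,h_2]=I$ holds automatically for every choice of $g_1,g_2\in K$, since $[g_j,I]=I$. Choosing $g_1,g_2\in\SU(2)$ that do not commute yields a representation $\rho=(g_1,I,g_2,I)$ whose image is non-abelian, so $[\rho]\in\M^i$; but $[h_1,h_2]=[I,I]=I$ forces $[\rho]\in\M\setminus\M^\circ$ by Lemma~\ref{lm:boundary}. Hence $[\rho]\in\M^i\setminus\M^\circ$, and the inclusion is proper.

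There is no serious obstacle here: the containment is immediate from Lemma~\ref{lm:boundary}, and strictness requires only one explicit example. The single point to watch, rather than a genuine difficulty, is the interplay of the two descriptions of $\M^i$, which is defined as the non-abelian locus and coincides with the irreducible locus because $\SU(2)$ has property CI. The displayed example is visibly non-abelian, hence irreducible, so it genuinely lies in $\M^i$ and no reducibility or stability subtlety intervenes.
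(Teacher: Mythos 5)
Your proof is correct and follows essentially the same route as the paper: the inclusion $\M^\circ\subseteq\M^i$ is deduced from Lemma~\ref{lm:boundary} exactly as in the paper, and your strictness example $(g_1,I,g_2,I)$ with non-commuting $g_1,g_2$ is the same one the paper uses. The extra remarks on conjugation invariance and on why non-abelian suffices for membership in $\M^i$ are sound but do not change the argument.
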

\begin{proof}
Suppose
$[\rho] = [g_1;h_1;g_2;h_2] \in \mu^{-1}(\tilde\Delta^\circ)$.
It follows from Lemma \ref{lm:boundary} that $h_1$ does not commute with $h_2$.  Hence $\rho$ is not abelian, i.e. $[\rho] \in \M^i$. On the other hand, consider $[g_1;I;g_2;I]\in\M\setminus \M^\circ$. For any $g_1,\ g_2\in K\setminus \{\pm I\}$ such that $g_1$ does not commute with $g_2$ (the commutator relation is satisfied automatically), $(g_1,I,g_2,I)$ is certainly not abelian, i.e. $[g_1;I;g_2;I]$ belongs to $\M^i$. We conclude that $\M^\circ \subsetneqq \M^i.$
\end{proof}

\begin{prop}\label{prop:face}
If $[\rho]=[g_1;h_1;g_2;h_2]\in \mu^{-1}(\partial \tilde\Delta)$ is an irreducible representation, then at least one of $h_1, h_2, h_1h_2$ is $\pm I$, i.e. it belongs to $\mu^{-1}(L)\cup \mu^{-1}(V)$, thus $\mu^{-1}(S)$ consists of abelian representations only.  Hence $\dim(\mu^{-1}(S)) \le 4$
\end{prop}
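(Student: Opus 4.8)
The plan is to argue by contraposition within the boundary. Assume $[\rho]=[g_1;h_1;g_2;h_2]$ satisfies $\mu([\rho])\in\partial\tilde\Delta$ and that \emph{none} of $h_1,h_2,h_1h_2$ equals $\pm I$; I will show $\rho$ must then be abelian. First I would set up the dictionary between $\mu$ and these central elements: by \eqref{eq:trace} one has $f_A([\rho])\in\{0,1\}$ precisely when $\rho(A)=\pm I$. Reading off the vertex set $V=\{(0,0,0),(0,1,1),(1,0,1),(1,1,0)\}$, each coordinate takes a value in $\{0,1\}$ on $L\cup V$ but lies in $(0,1)$ throughout every open face. Hence $\mu([\rho])\in S$ is equivalent to $h_1,h_2,h_1h_2$ all being regular (i.e.\ $\ne\pm I$), and by Lemma~\ref{lm:boundary} any class on $\partial\tilde\Delta$ already satisfies $[h_1,h_2]=I$. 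So it suffices to prove: if $[h_1,h_2]=I$ and $h_1,h_2,h_1h_2$ are all regular, then $\rho$ is abelian.

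For the core step I would diagonalize. Since $h_1$ is regular and commutes with $h_2$, both lie in the unique maximal torus containing $h_1$; after conjugation I may take this torus diagonal and write the eigenvalues as $e^{\pm i\alpha}$ and $e^{\pm i\beta}$, so that $\sin\alpha,\sin\beta\ne 0$ and---crucially---$\sin(\alpha+\beta)\ne 0$ because $h_1h_2$ is regular. Writing $g_1=\left(\begin{smallmatrix} a & b\\ -\bar b & \bar a\end{smallmatrix}\right)$ and $g_2=\left(\begin{smallmatrix} c & d\\ -\bar d & \bar c\end{smallmatrix}\right)$, I would compute only the $(1,1)$-entry of the relation $[g_1,h_1]=[g_2,h_2]^{-1}$, which reads $|a|^2+|b|^2e^{-2i\alpha}=|c|^2+|d|^2e^{2i\beta}$. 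Separating real and imaginary parts and using $|a|^2+|b|^2=|c|^2+|d|^2=1$ gives $|b|^2\sin^2\alpha=|d|^2\sin^2\beta=:w$ together with $|b|^2\sin\alpha\cos\alpha+|d|^2\sin\beta\cos\beta=0$; substituting the former into the latter collapses everything to the single identity
\[
w\,\frac{\sin(\alpha+\beta)}{\sin\alpha\,\sin\beta}=0 .
\]

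This identity is the heart of the matter, and the step I expect to be the main obstacle is exactly isolating it: one must see that the correct combination of the entrywise equations produces the factor $\sin(\alpha+\beta)$, so that it is the regularity of the \emph{product} $h_1h_2$, and not merely of $h_1$ and $h_2$ separately, that forces the conclusion. Granting it, $\sin(\alpha+\beta)\ne 0$ forces $w=0$, hence $b=d=0$; thus $g_1,g_2$ are diagonal, all four matrices lie in $T$, and $\rho$ is abelian, contradicting irreducibility. Consequently every irreducible class on $\partial\tilde\Delta$ has some $h_i\in\{\pm I\}$ and so lies in $\mu^{-1}(L)\cup\mu^{-1}(V)$, while $\mu^{-1}(S)$ contains abelian classes only. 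Finally, since the abelian locus is $\M\setminus\M^i=T^4/W$ with $T=\U(1)$, we have $\mu^{-1}(S)\subseteq T^4/W$ and therefore $\dim\mu^{-1}(S)\le 4$.
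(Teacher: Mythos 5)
Your proof is correct, and it reorganizes the paper's computation rather than repeating it. Both arguments begin the same way: by Lemma \ref{lm:boundary} the boundary condition gives $[h_1,h_2]=I$, one diagonalizes $h_1,h_2$, writes $g_1,g_2$ as general $\SU(2)$ matrices, and equates $(1,1)$-entries in $[g_1,h_1]=[g_2,h_2]^{-1}$; your two real equations $|b|^2\sin^2\alpha=|d|^2\sin^2\beta$ and $|b|^2\sin\alpha\cos\alpha+|d|^2\sin\beta\cos\beta=0$ are precisely the paper's (\ref{eq:one}) and (\ref{eq:two}). The routes then diverge. The paper argues directly for an arbitrary boundary point, so it must also extract equation (\ref{eq:fourth}) from the modulus of the off-diagonal entry and run a four-case analysis on $c_1=|w_1|^2$ and $c_2=|w_2|^2$ (with further subcases on $c_1=0$ and $\cos 2\theta_1=1$), identifying in each regime which of $h_1,h_2,h_1h_2$ is $\pm I$. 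You instead argue by contraposition, so regularity of $h_1$, $h_2$ \emph{and} $h_1h_2$ is available from the outset; this legitimizes the divisions by $\sin\alpha$ and $\sin\beta$ and collapses (\ref{eq:one}) and (\ref{eq:two}) into the single identity $w\,\sin(\alpha+\beta)/(\sin\alpha\sin\beta)=0$, after which $\sin(\alpha+\beta)\neq 0$ forces $w=0$, hence $b=d=0$ and abelianness; no case analysis and no use of the off-diagonal entry are needed. Your version is shorter and makes the conceptual point transparent (it is regularity of the \emph{product} $h_1h_2$ that rules out irreducibility), while the paper's longer route yields slightly finer information about which element is central in which case; both correctly conclude that $\mu^{-1}(S)$ consists of abelian classes, hence lies in $T^4/W$, giving $\dim(\mu^{-1}(S))\le 4$. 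One small slip: your phrase ``each coordinate takes a value in $\{0,1\}$ on $L\cup V$'' should read ``at least one coordinate'' (on the open edge joining $(0,1,1)$ to $(1,0,1)$ the points are $(t,1-t,1)$, so only the third coordinate is in $\{0,1\}$); this is harmless, since the fact your argument actually uses --- that every point of $S$ has all three coordinates in $(0,1)$ --- is correct.
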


\begin{proof}
Since $[g_1;h_1;g_2;h_2]\in\mu^{-1}(\partial\tilde\Delta)$, $[h_1,h_2]=I$, i.e. they are in the same maximal torus. Without loss of generality, we can assume $h_1=\left(
\begin{smallmatrix}
e^{i\theta_1} & 0
\\
0 & e^{-i\theta_1}
\end{smallmatrix}
\right)$, $h_2=\left(
\begin{smallmatrix}
e^{i\theta_2} & 0
\\
0 & e^{-i\theta_2}
\end{smallmatrix}
\right)$. 

Denote $g_1=\left(
\begin{smallmatrix}
w_1 & z_1
\\
-\bar{z}_1 & \bar{w}_1
\end{smallmatrix}
\right)$, $g_2=\left(
\begin{smallmatrix}
w_2 & z_2
\\
-\bar{z}_2 & \bar{w}_2
\end{smallmatrix}
\right)$ with $w_j\bar{w}_j+z_j\bar{z}_j=1$ for $j=1,2$.
Then
$$ [g_1,h_1]=\left(
\begin{smallmatrix}
w_1\bar w_1+z_1\bar z_1e^{-2i\theta_1} & w_1z_1(1-e^{2i\theta_1})
\\
\bar w_1\bar z_1(e^{-2i\theta_1}-1) & w_1\bar w_1+z_1\bar z_1e^{2i\theta_1}
\end{smallmatrix}
\right), \quad [g_2,h_2]=\left(
\begin{smallmatrix}
w_2\bar w_2+z_2\bar z_2e^{-2i\theta_2} & w_2z_2(1-e^{2i\theta_2})
\\
\bar w_2\bar z_2(e^{-2i\theta_2}-1) & w_2\bar w_2+z_2\bar z_2e^{2i\theta_2}
\end{smallmatrix}
\right).$$
So $[g_1,h_1]=[g_2,h_2]^{-1}$ is equivalent to
\[
w_1\bar w_1+z_1\bar z_1e^{-2i\theta_1}=w_2\bar w_2+z_2\bar z_2e^{2i\theta_2}\quad \mbox{and}\quad
w_1z_1(1-e^{2i\theta_1})=-w_2z_2(1-e^{2i\theta_2}).
\]
Using the condition that $w_j\bar w_j+z_j\bar z_j=1$ and separating the real and imaginary parts of the first equality, the above two equalities are equivalent to the follow three equalities:
\begin{eqnarray}
w_1\bar w_1+(1-w_1\bar w_1) \cos(-2\theta_1)&=&w_2\bar w_2+(1-w_2\bar w_2) \cos(2\theta_2), \nonumber\\
(1-w_1\bar w_1) \sin (-2\theta_1)&=&(1-w_2\bar w_2) \sin (2\theta_2), \nonumber\\
w_1z_1(1-e^{2i\theta_1})&=&-w_2z_2(1-e^{2i\theta_2}).\label{eq:third}
\end{eqnarray}
To simplify notation, let $w_j\bar w_j=c_j$, then $z_j\bar z_j=1-c_j$, $0\leq c_j\leq 1$, $j=0,1$.
Then the first equality becomes
\begin{equation}\label{eq:one}(1-c_1)(1-\cos 2\theta_1)=(1-c_2)(1-\cos 2\theta_2), \end{equation}
and the second equality becomes
\begin{equation}\label{eq:two} -(1-c_1)\sin 2\theta_1=(1-c_2)\sin 2\theta_2.\end{equation}
Equation (\ref{eq:third}) has complex numbers on both sides, so their lengths must be equal, which gives
\begin{eqnarray}
 c_1(1-c_1)(1-\cos 2\theta_1)=c_2(1-c_2)(1-\cos 2\theta_2). \label{eq:fourth}
 \end{eqnarray}

If $[g_1;h_1;g_2;h_2]$ is an irreducible representation, where $h_1, h_2$ are diagonal matrices, then at least one of $c_1, c_2$ is not $1$, because it means the off-diagonal entry of $g_1$ or $g_2$ is nonzero. Thus, we divide the discussion into the following $4$ cases, $(c_1\neq 1, c_2\neq 1)$, $(c_1\neq 1, c_2=1)$, $(c_1=1, c_2\neq 1)$, and $(c_1=1, c_2=1)$.

\noindent {\em Case 1:} $(c_1\neq 1, c_2\neq 1)$.
\begin{enumerate}
\item $\cos 2\theta_1\neq 1$.
Then equation (\ref{eq:one}) implies that $\cos 2\theta_2\neq 1$.
\begin{enumerate}
\item $c_1=0$.\\
Then equation (\ref{eq:fourth}) implies that $c_2=0$ because $1-c_2\neq 0$, and equation (\ref{eq:two}) implies that $-\sin 2\theta_1=\sin 2\theta_2$, and equation (\ref{eq:one}) implies that $\cos 2\theta_1=\cos 2\theta_2$. This two conditions implies that $2\theta_1=-2\theta_2$ or $2\theta_1=2\pi-2\theta_2$. Thus, $\theta_1+\theta_2=0$ or $\pi$. Thus, $h_1h_2=\left(
\begin{smallmatrix}
e^{i(\theta_1+\theta_2)} & 0
\\
0 & e^{-i(\theta_1+\theta_2)}
\end{smallmatrix}
\right)=
I$ or $-I$.

\item $c_1\neq 0$.\\
Then equation (\ref{eq:fourth}) also implies that $c_2\neq 0$, because the left hand side of equation (\ref{eq:fourth}) is nonzero, $1-c_2\neq 0$, $\cos 2\theta_2\neq 1$. Substituting equation (\ref{eq:one}) into equation (\ref{eq:fourth}), we have $c_1(1-c_2)(1-\cos 2\theta_2)=c_2(1-c_2)(1-\cos 2\theta_2)$, and since there is no zero term, we conclude $c_1=c_2$. Thus, equation (\ref{eq:one}) implies that $\cos 2\theta_1=\cos 2\theta_2$ and equation (\ref{eq:two}) implies $-\sin2\theta_1=\sin 2\theta_2$. This is the same as above, which gives $h_1h_2=I$ or $-I$.
\end{enumerate}
\item $\cos 2\theta_1=1$.\\
Then $\cos \theta_1=\pm 1$ and so $h_1=I$ or $-I$.
\end{enumerate}

\noindent {\em Case 2:} $(c_1\neq 1, c_2=1)$.
\begin{enumerate}
\item $c_1=0$.\\
Then equation (\ref{eq:one}) implies that $\cos 2\theta_1$=1, i.e. $\cos \theta_1=\pm 1$, and so $h_1=I$ or $-I$.
\item $c_1\neq 0$.\\
Then equation (\ref{eq:third}) implies that $w_1z_1(1-e^{2i\theta_1})=0$ because $1-c_2=0$ meaning $z_2=0$. This implies that $e^{2i\theta_1}=1$ because $c_1\neq 0,1$. Thus $\cos 2\theta_1=1$ and so $h_1=I$ or $-I$.
\end{enumerate}
\noindent {\em Case 3:} $(c_1= 1, c_2\neq1)$.
\begin{enumerate}
\item $c_2=0$.\\
Then equation (\ref{eq:one}) implies that $\cos 2\theta_2$=1, i.e. $\cos \theta_2=\pm 1$, and so $h_2=I$ or $-I$.
\item $c_2\neq 0$.\\
Then equation (\ref{eq:third}) implies that $w_2z_2(1-e^{2i\theta_2})=0$ because $1-c_1=0$ meaning $z_1=0$. This implies that $e^{2i\theta_2}=1$ because $c_2\neq 0,1$. Thus $\cos 2\theta_2=1$ and so $h_2=I$ or $-I$.
\end{enumerate}
\noindent {\em Case 4:} $(c_1=1,c_2=1)$.\\
This gives only reducible representations. Since $1-c_1=0=1-c_2$, so $g_1$ and $g_2$ are both diagonal matrices, $g_1$,$h_1$, $g_2$, $h_2$ all commute with each other, i.e. $[g_1;h_1;g_2;h_2]$ is abelian.
\end{proof}

 \begin{prop}\label{prop:edge}
$\dim(\mu^{-1}(L)) \le 4$.
\end{prop}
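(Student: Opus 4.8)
The plan is to stratify $L$ into the interiors of the six edges of $\tilde\Delta$, bound $\dim\mu^{-1}(e)$ for each edge $e$ separately, and then use that $\mu^{-1}(L)$ is their finite union, so that $\dim\mu^{-1}(L)$ is the maximum of the six values and it suffices to show each is at most $4$. Reading off the vertex set of $\tilde\Delta$, each edge interior is cut out by requiring exactly one of $h_1,h_2,h_1h_2$ to be $\pm I$ (the corresponding coordinate $f_i\in\{0,1\}$) while the remaining two trace coordinates lie in $(0,1)$.

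First I would cut the number of cases down using symmetry. The assignments $h_1\mapsto -h_1$ and $h_2\mapsto -h_2$ preserve the relation $[g_1,h_1][g_2,h_2]=I$ because $[g_i,-h_i]=[g_i,h_i]$, and the swap $(g_1,h_1,g_2,h_2)\mapsto(g_2,h_2,g_1,h_1)$ preserves it because $XY=I\iff YX=I$. Each descends to a homeomorphism $\Phi$ of $\M$ satisfying $\mu\circ\Phi=\Phi_*\circ\mu$ for an affine symmetry $\Phi_*$ of $\tilde\Delta$ (some $f_i\mapsto 1-f_i$, together with a permutation of the $f_i$), so $\Phi$ carries $\mu^{-1}(e)$ homeomorphically onto $\mu^{-1}(\Phi_* e)$. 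Under the group these generate, the six edges form two orbits: the four \emph{side} edges, on which $h_1$ or $h_2$ equals $\pm I$, and the two \emph{diagonal} edges, on which $h_1h_2=\pm I$. Since homeomorphisms preserve dimension it is enough to treat one representative of each orbit, say $e_A=\{h_1=I,\ f_2=f_3\in(0,1)\}$ and $e_B=\{h_1h_2=I,\ f_1=f_2\in(0,1)\}$.

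On $e_A$ the relation decouples: $h_1=I$ forces $[g_2,h_2]=I$, so after conjugating $h_2$ into the maximal torus $T$ (one parameter $\theta$, with $h_2\neq\pm I$) the centralizer condition gives $g_2\in T$ (one parameter) while $g_1\in K$ is free (three parameters); the residual gauge group is $N(T)$, whose identity component $T$ acts by conjugation on $g_1$ with generic orbits of dimension one. This yields $1+1+3-1=4$ on the irreducible stratum, and the abelian locus $g_1\in T$ is lower-dimensional, so $\dim\mu^{-1}(e_A)\le 4$.

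The diagonal edge $e_B$ is the step I expect to be the main obstacle, because there the relation does not decouple. The key point is to compare, for fixed $h_1=\operatorname{diag}(e^{i\theta},e^{-i\theta})$ and $h_2=h_1^{-1}$, the two commutator surfaces
$$
P_1=\{[g_1,h_1]:g_1\in K\}=\mathcal C_{h_1}\,h_1^{-1},\qquad
Q_2=\{[g_2,h_2]^{-1}:g_2\in K\}=\mathcal C_{h_2}\,h_2,
$$
where $\mathcal C$ denotes a conjugacy class; the identity $[g,h]^{-1}=[hgh^{-1},h^{-1}]$ together with $\mathcal C_{h^{-1}}=\mathcal C_h$ gives $Q_2=\mathcal C_{h_1}h_1^{-1}=P_1$. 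Thus, instead of two generic $2$-spheres meeting in a curve, $P_1$ and $Q_2$ \emph{coincide}, so the solution set of $[g_1,h_1]=[g_2,h_2]^{-1}$ fibers over the common $2$-sphere $P_1$ with each map $g_i\mapsto[g_i,h_i]^{\pm1}$ (a submersion $K\to S^2$ with one-dimensional fibers) contributing a one-dimensional fiber, giving a $4$-dimensional set for each $\theta$. Adding the parameter $\theta$ and dividing by the one-dimensional residual torus yields $4+1-1=4$ on the irreducible stratum, the reducible locus again being lower-dimensional. Hence $\dim\mu^{-1}(e_B)\le4$, and combining the two cases with the symmetry reduction gives $\dim\mu^{-1}(L)\le4$.
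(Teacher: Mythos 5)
Your proof is correct, but it takes a genuinely different route at one key point. The paper reduces all six edges to the single case $a_1=0$ (``without loss of generality''), i.e.\ $h_1=I$, and then bounds each point-fiber $\mu^{-1}(\underline{a})$ by $\dim K=3$ and adds $\dim L=1$; your side-edge count $1+1+3-1=4$ is essentially that same computation, organized per edge rather than per fiber. Where you genuinely differ is on the diagonal edges $h_1h_2=\pm I$: the elementary symmetries you allow yourself (the sign flips $h_i\mapsto -h_i$ and the swap) provably cannot carry a diagonal edge to a side edge, as you note, whereas the paper's WLOG tacitly uses the stronger fact that $C_1,C_2,C_3$ play symmetric roles --- a symmetry exchanging $h_2$ with $h_1h_2$ comes from a mapping class group element permuting the three circles of the pants decomposition, and is nowhere exhibited in the paper. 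Your replacement argument for the diagonal edge is sound: with $h_2=h_1^{-1}$, the identity $[g,h]^{-1}=[hgh^{-1},h^{-1}]$ does show that the two commutator images $\mathcal{C}_{h_1}h_1^{-1}$ and $\mathcal{C}_{h_2^{-1}}h_2$ coincide; both maps $g_i\mapsto[g_i,h_i]^{\pm1}$ are fiber bundles onto this $2$-sphere whose fibers are cosets of the one-dimensional centralizer $Z(h_1)=T$, so the solution locus is a $T^2$-bundle over the sphere, of dimension $4$ for each $\theta$, and the residual $N(T)$-quotient gives $4+1-1=4$ on the irreducible stratum, the abelian locus being of lower dimension. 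The trade-off: the paper's argument is shorter but rests on a reduction it never justifies (and whose justification needs input beyond the algebra visible in the proof), while yours is longer but self-contained, and as a by-product it shows the diagonal-edge preimages have dimension exactly $4$, so the bound in the Proposition is sharp.
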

\begin{proof}
Suppose that $\underline{a}=(a_1,a_2,a_3) \in L$.  Without loss of generality, we may assume $a_1 = 0$.  Then
$h_1=I$ and $$
\mu^{-1}(\underline{a}) = \{[g_1; I; g_2; h_2] \in\M: [g_2,h_2] = I \ \text{ and } \ \tr(h_2) = 2\cos(\pi a_2)\}.
$$
Fix $g_1$.

\noindent {\em Case 1:}  $g_2 \not\in \{\pm I\}$.
Then $\mu^{-1}(\underline{a})$ contains two points
$$[g_1; I; g_2; h_2],  [g_1^{-1}; I; g_2^{-1}; h_2^{-1}].$$   Since $\dim(K) = 3$, $\dim(\mu^{-1}(\underline{a})) \le 3.$

\noindent {\em Case 2:} $g_2 \in \{\pm I\}$.  Without loss of generality, we may assume $g_2 = I$.  Then $[g_1; I; I; h_2] \in \mu^{-1}(\underline{a}
)$ for the entire conjugacy class of $h_2$.  This means
$$
\mu^{-1}(\underline{a}) = \{[g_1; I; I; h_2]\in\M :  g_1, h_2 \in K \text{ and }  \ \tr(h_2) = 2\cos(\pi a_2)\}
$$
By fixing $h_2$, we see that $\dim(\mu^{-1}(\underline{a})) \le 3$.

In both cases, $\dim(\mu^{-1}(\underline{a})) \le 3$.  Since $\dim(L) = 1$, the Proposition follows.
\end{proof}

 \begin{prop}\label{prop:vertex}
If $\underline{a} = (a_1, a_2, a_3) \in V$, then $\mu^{-1}(a)$ is homeomorphic to $\tilde\Delta$.  Hence $\dim(\mu^{-1}(V)) \le 3$.
\end{prop}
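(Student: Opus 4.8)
The plan is to show that over a vertex the fiber degenerates to the representation variety of the free group $F_2$, so that the identification $K^2/K \cong \tilde\Delta$ from Section 2 applies directly.

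First I would unwind the meaning of a vertex. Recall $f_i([\rho]) = \cos^{-1}(\tr(\rho(C_i))/2)/\pi$ and that $C_1, C_2, C_3$ represent $h_1, h_2, h_1h_2$. For $\underline a \in V$ each coordinate $a_i$ is $0$ or $1$; since in $\SU(2)$ one has $\tr(x) = 2$ iff $x = I$ and $\tr(x) = -2$ iff $x = -I$, the value $a_i = 0$ forces $\rho(C_i) = I$ and $a_i = 1$ forces $\rho(C_i) = -I$. Thus each of $h_1, h_2, h_1h_2$ is pinned to $\pm I$. I would then check the four vertices individually and note that the assignments are mutually consistent: $(0,0,0)$ gives $(h_1,h_2) = (I,I)$, $(0,1,1)$ gives $(I,-I)$, $(1,0,1)$ gives $(-I,I)$, and $(1,1,0)$ gives $(-I,-I)$, in each case $h_1h_2$ automatically carrying the prescribed central value. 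So over a vertex $h_1$ and $h_2$ are fixed central elements.

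The crucial step is that once $h_1, h_2 \in \{\pm I\}$ are central, the surface relation imposes no condition on $g_1, g_2$: the commutators $[g_1,h_1]$ and $[g_2,h_2]$ are both $I$ for every $g_1, g_2 \in K$, so $[g_1,h_1][g_2,h_2] = I$ holds automatically. Moreover conjugation by $k \in K$ fixes the central $h_i$, so the residual $K$-action on a point of the fiber is exactly simultaneous conjugation on $(g_1, g_2)$. Hence $\mu^{-1}(\underline a) = \{[g_1; h_1; g_2; h_2] : g_1, g_2 \in K\}$ is canonically identified with $K^2/K = \Hom(F_2, K)/K$, with $h_1, h_2$ merely recording which vertex we sit over. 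Applying the Proposition of Section 2 that $\Psi$ identifies $\Hom(F_2,K)/K$ with the tetrahedron $\tilde\Delta$, I conclude $\mu^{-1}(\underline a) \cong \tilde\Delta$.

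Finally, since $V$ is a finite set of four points, $\mu^{-1}(V)$ is a disjoint union of four copies of $\tilde\Delta$, each of dimension $3$; hence $\dim(\mu^{-1}(V)) \le 3$. The only place demanding care is the consistency check at the vertices together with the observation that centrality of $h_1, h_2$ trivializes both the defining relation and its effect on the conjugation action; after that the result is immediate from the earlier identification, so I do not expect a genuine obstacle.
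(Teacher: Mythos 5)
Your proof is correct and follows essentially the same route as the paper: pin $h_1,h_2$ to central elements $\pm I$ over a vertex, observe the surface relation becomes vacuous, and identify the fiber with $(K\times K)/K\cong\tilde\Delta$ via the earlier Proposition. The only difference is cosmetic: the paper handles one vertex "without loss of generality," whereas you make the centrality argument explicit for all four vertices, which justifies that WLOG.
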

\begin{proof}
Without loss of generality, we may assume that $a_1 = a_2 = 0$.  Then
$$
\mu^{-1}(\underline{a}) = \{[g_1; I; g_2; I]\in\M : g_1, g_2 \in K\} = (K \times K) /K \cong \tilde\Delta.
$$

\end{proof}

 \begin{thm}
$\M^\circ$ is open and dense in $\M$.
\end{thm}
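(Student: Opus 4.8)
The plan is to treat the two assertions separately: openness is essentially immediate, while density rests on a dimension count already prepared by Propositions~\ref{prop:face}--\ref{prop:vertex}.

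First I would dispose of openness. Since $\tilde\Delta^\circ$ is the interior of the solid tetrahedron, it is open in $\R^{\times 3}$, and $\mu$ is continuous (being built from the trace functions $f_1,f_2,f_3$), so $\M^\circ=\mu^{-1}(\tilde\Delta^\circ)$ is open in $\M$ directly from its definition. Nothing beyond continuity is needed here, and in particular $\M\setminus\M^\circ$ is closed.

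The substance lies in density. By Lemma~\ref{lm:boundary} the complement is $\M\setminus\M^\circ=\mu^{-1}(\partial\tilde\Delta)$, and since $\partial\tilde\Delta=S\cup L\cup V$ this decomposes as $\mu^{-1}(S)\cup\mu^{-1}(L)\cup\mu^{-1}(V)$. The three preceding propositions bound the dimension of each piece: $\dim\mu^{-1}(S)\le 4$, $\dim\mu^{-1}(L)\le 4$, and $\dim\mu^{-1}(V)\le 3$. As the dimension of a finite union is the maximum of the dimensions of its parts, the closed set $\M\setminus\M^\circ$ has dimension at most $4$. To conclude I would invoke that $\M$ is homeomorphic to $\P^3(\C)$ (cf.\ Theorem~\ref{thm:NR}), hence a topological manifold of real dimension $6$. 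A subset of strictly smaller dimension can contain no nonempty open set, since any such open set would itself be an open, hence $6$-dimensional, submanifold; therefore the closed set $\M\setminus\M^\circ$ has empty interior, which is exactly the statement that $\M^\circ$ is dense in $\M$.

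The main obstacle I anticipate is making the dimension comparison rigorous: one must fix a single notion of dimension (the real semialgebraic dimension, or covering dimension, on the separable metric space $\M$) for which the bounds in the cited propositions are valid, which is monotone under inclusion, which behaves as a maximum over finite unions of closed sets, and under which a $k$-dimensional subset of a $6$-manifold with $k<6$ necessarily has empty interior. An alternative that sidesteps this machinery, at the cost of more labor, would mimic the explicit path construction of Proposition~\ref{prop:Mi_dense}: given $[\rho]\in\mu^{-1}(\partial\tilde\Delta)$ one would exhibit a path of representations whose $\mu$-image is pushed into $\tilde\Delta^\circ$, but this would require treating the face, edge, and vertex strata case by case and is considerably less clean than the dimension count.
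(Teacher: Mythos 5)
Your argument is correct, and its core --- bounding the dimension of $\M\setminus\M^\circ=\mu^{-1}(S)\cup\mu^{-1}(L)\cup\mu^{-1}(V)$ by $4$ via Propositions~\ref{prop:face}--\ref{prop:vertex} and concluding that a set of dimension $\le 4$ cannot contain an open subset of a $6$-dimensional space --- is exactly the paper's mechanism. The difference is \emph{where} the dimension comparison is performed. You compare against $\M\cong\P^3(\bC)$, importing Theorem~\ref{thm:NR} to know that $\M$ is a topological $6$-manifold. The paper instead stays inside $\M^i$: since $\mu^{-1}(S)$ consists entirely of abelian representations (Proposition~\ref{prop:face}), the part of $\M\setminus\M^\circ$ meeting $\M^i$ lies in $\mu^{-1}(L)\cup\mu^{-1}(V)$, of dimension $\le 4$ inside the \emph{smooth} $6$-manifold $\M^i$; this gives density of $\M^\circ$ in $\M^i$, and Proposition~\ref{prop:Mi_dense} (density of $\M^i$ in $\M$) finishes the job. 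The paper's route buys two things: it sidesteps the foundational worry you flag about which notion of dimension behaves well on the possibly singular space $\M$ (smoothness of $\M^i$ makes the ``low-dimensional closed subset has empty interior'' step unambiguous), and it avoids invoking the Narasimhan--Ramanan homeomorphism, which matters because the paper is explicitly trying to recover the topology of $\M$ by symplectic means rather than assume it. Your route buys a single clean step with no detour through $\M^i$, at the cost of that external input. Your treatment of openness (continuity of $\mu$ plus openness of $\tilde\Delta^\circ$) is fine; the paper does not even record it.
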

\begin{proof}
Since $\M^i$ is smooth, Propositions \ref{prop:face}, \ref{prop:edge}, \ref{prop:vertex} imply that $\M^\circ$ is dense in $\M^i$.  By Proposition \ref{prop:Mi_dense}, $\M^\circ$ is dense in $\M$.
\end{proof}

\subsection{Comparison with $\P^3(\C)$}
$ $

Let $\Lambda=\{t\in T^3|~ t\cdot [\rho]=[\rho], \forall [\rho]\in\M^\circ\}$ be the isotropy group of the Hamiltonian $T^3$-action on $\M^\circ$ described in Section \ref{subsec:T3}. Direct calculation shows that $\Lambda=\{(1,1,1), (-1,-1,-1)\}$. Let $T^3_\Lambda=T^3/\Lambda$. Then the Hamiltonian $T^3$ action induces a Hamiltonian $T^3_\Lambda$-action. Denote the resulting moment map by $\mu_\Lambda$.

Denote by $\Delta$ the standard 3-simplex in $\bR^3$ with vertices at
$$
(0,0,0), (0,0,1), (0,1,0), (1,0,0)
$$
and $\Delta^\circ$ the interior of $\Delta$.

\begin{prop}\label{prop:moment image M}
The moment map image $\mu_\Lambda(\M^\circ)$ is $\Delta^\circ$.
\end{prop}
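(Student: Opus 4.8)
The plan is to deduce the claim from the already-established identity $\mu(\M^\circ)=\tilde\Delta^\circ$ together with the explicit linear change of coordinates that relates $\mu_\Lambda$ to $\mu$. The point is that $\M^\circ$ need not be compact and $\M$ is singular, so rather than invoking a convexity theorem I would compute the image directly.

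First I would record that $\mu(\M^\circ)=\tilde\Delta^\circ$ holds exactly. By definition $\M^\circ=\mu^{-1}(\tilde\Delta^\circ)$, and the identity $\mu(\M)=\tilde\Delta\supseteq\tilde\Delta^\circ$ shows that every point of $\tilde\Delta^\circ$ is attained by some $[\rho]\in\M$; such a $[\rho]$ automatically lies in $\mu^{-1}(\tilde\Delta^\circ)=\M^\circ$, so $\mu(\M^\circ)=\tilde\Delta^\circ$.

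Next I would pin down how $\mu_\Lambda$ differs from $\mu$. The torus $T^3$ and its effective quotient $T^3_\Lambda=T^3/\Lambda$ share the same Lie algebra $\R^3$, so the infinitesimal generators, and hence the moment map as a map into $\R^3\cong\ft^*$, are unchanged; what changes is the integral lattice, and with it the identification of the acting torus with a \emph{standard} $\U(1)^3$. Writing $\Lambda=\{(1,1,1),(-1,-1,-1)\}$ as the order-two subgroup $\{(0,0,0),(\tfrac12,\tfrac12,\tfrac12)\}$ of $T^3=\R^3/\bZ^3$, the effective quotient is $T^3_\Lambda=\R^3/\Lambda'$ with $\Lambda'=\bZ^3+\bZ\cdot(\tfrac12,\tfrac12,\tfrac12)$. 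A lattice isomorphism $\Lambda'\xrightarrow{\sim}\bZ^3$ identifies $T^3_\Lambda$ with the standard $\U(1)^3$, and the dual change of coordinates on $\ft^*$ is a linear automorphism $A$ of $\R^3$ with $\mu_\Lambda=A\circ\mu$. The relevant $A$ is the one whose inverse transpose is the isomorphism $\phi=\left(\begin{smallmatrix}1&1&0\\1&0&1\\0&1&1\end{smallmatrix}\right)$ carrying the generators $(1,0,0),(0,1,0),(\tfrac12,\tfrac12,\tfrac12)$ of $\Lambda'$ to the $\bZ^3$-basis $(1,1,0),(1,0,1),(1,1,1)$, namely $A=\tfrac{1}{2}\left(\begin{smallmatrix}1&1&-1\\1&-1&1\\-1&1&1\end{smallmatrix}\right)$.

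Finally I would check on vertices that $A$ carries $\tilde\Delta$ onto $\Delta$: one computes $A(0,0,0)=(0,0,0)$, $A(0,1,1)=(0,0,1)$, $A(1,0,1)=(0,1,0)$, $A(1,1,0)=(1,0,0)$, so $A$ matches the four vertices of $\tilde\Delta$ with the four vertices of $\Delta$. Being an affine isomorphism of $\R^3$, $A$ sends the convex body $\tilde\Delta$ homeomorphically onto $\Delta$, hence $\tilde\Delta^\circ$ onto $\Delta^\circ$. Combining the steps, $\mu_\Lambda(\M^\circ)=A(\mu(\M^\circ))=A(\tilde\Delta^\circ)=\Delta^\circ$. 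The main obstacle is the middle step: justifying that passing to the effective action and identifying $T^3_\Lambda$ with the standard $\U(1)^3$ really replaces $\mu$ by $A\circ\mu$ with this particular $A$. This requires verifying that $\phi$ is a genuine lattice isomorphism $\Lambda'\xrightarrow{\sim}\bZ^3$ (so that $\Delta$ is the \emph{standard} simplex for $\bZ^3$, not merely some unimodular image of $\tilde\Delta$) and that the normalization of $\mu_\Lambda$ fixes the origin; the vertex computation and the set-theoretic conclusion are then routine.
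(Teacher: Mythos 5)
Your proposal is correct and takes essentially the same route as the paper: the paper identifies $T^3_\Lambda$ with a standard torus via the explicit quotient homomorphism $P_\Lambda(t_1,t_2,t_3)=(t_1t_2,t_2t_3,t_1t_3)$, whose Lie-algebra matrix is precisely a lattice isomorphism of the kind you construct (yours differs from it only by a permutation of the target coordinates), and then reads off the image as $(P_\Lambda^*)^{-1}(\tilde\Delta^\circ)=\Delta^\circ$. Your extra verifications --- that $\mu(\M^\circ)=\tilde\Delta^\circ$, that $\phi$ really is a lattice isomorphism, and the vertex-by-vertex check --- are details the paper leaves implicit rather than a different method.
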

\begin{proof}
The explicit quotient homomorphism is
$$
P_\Lambda : T^3 \lto T^3_\Lambda, \ \ \ P_\Lambda(t_1,t_2,t_3) = (t_1 t_2, t_2 t_3, t_1 t_3).
$$
Let $\ft$ and $\ft_\Lambda$ be the Lie algebra of $T^3$ and $T^3_\Lambda$, respectively.  Then
$P_\Lambda$ induces homomorphisms
$$P_\Lambda : \ft \lto \ft_\Lambda, \ \ \ P_\Lambda^* : \ft_\Lambda^* \lto \ft^*.$$
The moment map associated with the induced Hamiltonian $T^3_\Lambda$-action satisfies
$$
\mu = P_\Lambda^* \circ \mu_\Lambda.
$$
A direct calculation shows
$$
P_\Lambda =
\left(
\begin{array}{ccc}
 1 & 1 & 0 \\
 0 & 1 & 1 \\
 1 & 0 & 1
\end{array}
\right).
$$
and the moment image of $\mu_\Lambda$ is $(P_\Lambda^*)^{-1}(\tilde\Delta^\circ)=\Delta^\circ$.
\end{proof}

A priori, $T^3_\Lambda$ is only an effective action on $\M^\circ$. However, it is in fact free because $\Lambda$ is the kernel of the action at a generic point.
Note that $T^3_\Lambda$ is isomorphic to $T^3$ but the original $T^3$ action is not effective and is not free.

\begin{thm}\label{thm:trivial}
$$\M^\circ \cong T^3_\Lambda \times \Delta^\circ.$$
\end{thm}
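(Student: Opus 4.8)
The plan is to exhibit $\mu_\Lambda : \M^\circ \to \Delta^\circ$ as a principal $T^3_\Lambda$-bundle and then invoke contractibility of the base. First I would record that the $T^3_\Lambda$-action is free (established immediately above) and that $T^3_\Lambda$ is compact, so the action is automatically proper; by the quotient manifold theorem the projection $\pi : \M^\circ \to \M^\circ/T^3_\Lambda$ is then a smooth principal $T^3_\Lambda$-bundle, with fibers diffeomorphic to $T^3_\Lambda$. Since the action is Hamiltonian and free, at every point the differential $d\mu_\Lambda$ has rank equal to $\dim T^3_\Lambda = 3$: its kernel is the symplectic orthogonal of the tangent space to the orbit, and its image is the annihilator of the (trivial) stabilizer algebra, hence all of $\ft_\Lambda^*$. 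Thus $\mu_\Lambda$ is a submersion onto the $3$-dimensional open set $\Delta^\circ$. Note that $\dim \M^\circ = 6$ since $\M \cong \P^3(\C)$, so the orbits and the moment fibers both have dimension $3$.

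Next I would show that $\mu_\Lambda$ descends to a diffeomorphism $\bar\mu_\Lambda : \M^\circ/T^3_\Lambda \to \Delta^\circ$. Invariance of the moment map under its own torus action gives the factorization $\mu_\Lambda = \bar\mu_\Lambda \circ \pi$. Surjectivity is Proposition~\ref{prop:moment image M}, and since both sides are $3$-manifolds and $\mu_\Lambda$ is a submersion, $\bar\mu_\Lambda$ is a local diffeomorphism; so it remains to prove injectivity, i.e.\ that each fiber $\mu_\Lambda^{-1}(\underline{a})$, $\underline{a}\in\Delta^\circ$, is a single $T^3_\Lambda$-orbit. Because the orbits are $3$-dimensional and the fibers are $3$-dimensional, each orbit is open and closed in its fiber, so this is equivalent to connectedness of the interior fibers.

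The main obstacle is exactly this connectedness. I would establish it from the pants decomposition of Figure~1: cutting $\Sigma$ along $C_1,C_2,C_3$ expresses $[\rho]$ as a gluing of two three-holed-sphere representations whose boundary holonomies have the conjugacy classes prescribed by $\underline{a} = (f_1,f_2,f_3)\in\Delta^\circ$. When $\underline{a}$ is interior, each boundary holonomy lies in $K\setminus\{\pm I\}$, so each pair of pants carries an \emph{irreducible} $F_2$-representation, which by the trace coordinates of $\Hom(F_2,K)/K$ is rigid, i.e.\ uniquely determined up to conjugation by its three boundary traces. The only remaining freedom in reassembling $[\rho]$ is the three gluing (twist) parameters, each valued in the centralizer $T=\U(1)$ of the corresponding boundary holonomy; this is precisely the Goldman twist $T^3$-action, and modulo $\Lambda$ it is the $T^3_\Lambda$-action. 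Since the matching data form a torsor under the connected group $T^3$ with no discrete ambiguity (the rigid pants representations being unique), the fiber is connected, hence a single $T^3_\Lambda$-orbit. This is the toric description of Jeffrey--Weitsman, which I would cite (\cite{JW92}) for the connectedness of interior fibers rather than reprove in full. Consequently $\bar\mu_\Lambda$ is a bijective local diffeomorphism, hence a diffeomorphism, and $\M^\circ/T^3_\Lambda \cong \Delta^\circ$.

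Finally, since $\Delta^\circ$ is convex and therefore contractible, the principal $T^3_\Lambda$-bundle $\pi : \M^\circ \to \M^\circ/T^3_\Lambda \cong \Delta^\circ$ admits a global section and is trivial, yielding the desired diffeomorphism $\M^\circ \cong T^3_\Lambda \times \Delta^\circ$. I expect the submersion and principal-bundle steps to be routine given the freeness already proved, and the genuine work to lie in the fiber-connectedness step of the third paragraph, which is where the representation-theoretic content of the pants decomposition enters.
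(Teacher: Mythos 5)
Your proof is correct and follows essentially the same route as the paper: the free Hamiltonian $T^3_\Lambda$-action exhibits $\mu_\Lambda:\M^\circ\to\Delta^\circ$ as a principal $T^3_\Lambda$-bundle, which is trivial because $\Delta^\circ$ is contractible. The only difference is one of detail: the paper simply asserts the bundle structure from complete integrability and freeness, whereas you explicitly isolate the fiber-connectedness issue (each interior fiber being a single orbit) and justify it via rigidity of the pants representations and the toric description of \cite{JW92}, a step the paper leaves implicit.
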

\begin{proof}
$\M^\circ$ is a 6-dimensional symplectic manifold with an effective $T^3_\Lambda$-Hamiltonian action.  Hence $(\M^\circ, \mu_\Lambda, T^3_\Lambda)$ forms a completely integrable system.
Since the action is free, $\M^\circ$ is a $T^3_\Lambda$-bundle over $\Delta^\circ$.
The Theorem then follows from the fact that $\Delta^\circ$ is contractible.
\end{proof}
This Theorem provides a global coordinate system for $\M^\circ$.

On the other hand, recall the standard Hamiltonian $T^3$-action on $\P^3(\C)$.
For $\bold{z} \in \C^4 \setminus \{0\}$, denote by $||\bold{z}||$ its Euclidean norm and $[\bold{z}]$ its image in $\P^3(\C)$.
$\P^3(\C)$ is a completely integrable system with the action
$$
J : T^3 \times \P^3(\C) \lto \P^3(\C), \ \ \ J((t_1,t_2,t_3),[\bold{z}]) = [z_0; t_1 z_1; t_2 z_2; t_3 z_3].
$$
The associated moment map is
$$
\nu : \P^3(\C) \lto \R^{\times 3}, \ \ \ \nu[\bold{z}] = \frac{(|z_1|^2, |z_2|^2, |z_3|^2)}{2||\bold{z}||^2},
$$
and the moment image is the standard 3-simplex $\Delta$. Moreover, we know that $\nu^{-1}(\Delta^\circ)$ is open dense in $\P^3(\C)$ and that the action is free on $\nu^{-1}(\Delta^\circ)$.

\begin{rem}
We now have two completely integrable Hamiltonian systems with the same moment map image. In particular, we have $\M^\circ\cong T^3_\Lambda\times\Delta^\circ\cong T^3\times \Delta^\circ\cong \nu^{-1}(\Delta^\circ)$. Thus, we conclude a (symplectic) identification between an open dense subset $\M^\circ$ of $\M$ and an open dense subset $\nu^{-1}(\Delta^\circ)$ of $\P^3(\C)$.
Moreover, if we define $\mu_\Lambda$ by $\mu=P^*_\Lambda\circ\mu_\Lambda$ on $\M\setminus \M^\circ$ as well, we get $\mu_\Lambda(\M)=\Delta=\nu(\P^3(\C))$.
\end{rem}

\section{Involutions}

Next, we want to investigate  various anti-symplectic involutions on the moduli space $\M$.

\subsection{Involutions that satisfy Duistermaat's conditions}
$ $

In this subsection, we wish to find all involutions that satisfy the Duistermaat conditions, i.e., involutions that are compatible with the torus action and are anti-symplectic with respect to the Hamiltonian torus action. It turns out that if Duistermaat's conditions are to be satisfied, the possibilities of the involution are very limited. In fact, there is {\em only one type} that we shall now explain.

Consider a compact connected symplectic manifold $(M,\omega)$ with a Hamiltonian action of a torus $T$. Let $\tau$ be an anti-symplectic involution on $M$. Duistermaat \cite{Du83} called such an involution $\tau$ {\em compatible} with the torus action if the Hamiltonian functions $\mu_\xi$ are $\tau$-invariant: $\tau^*\mu_{\xi}=\mu_\xi$ for all $\xi\in \ft$. This is equivalent to saying that the generating vector field $\xi_M$ reverses its direction under $\tau$ for any $\xi\in\ft$, i.e. satisfies $\tau_*(\xi_M)=-\xi_M$. Duistermaat showed that the moment map image $\mu(M^\tau)$ of the fixed point set $M^\tau$ is the same as the moment map image $\mu(M)$ of the whole manifold $M$. We call such $\tau$ an involution satisfying Duistermaat's conditions. Here we will define an involution $\tau$ on $\M^\circ$ that satisfies  Duistermaat's conditions with respect to our Hamiltonian $T^3$ action and then explain that this is the only possibility.

Recall from Theorem \ref{thm:trivial} that $\mu_\Lambda:\M^\circ\to\Delta^\circ$ is a moment map and $\M^\circ$ is a trivial principal $T^3_\Lambda$-bundle over $\Delta^\circ$. Let $\fs:\Delta^\circ\to\M^\circ$ be a global section. Denote the image of the composition $\fs\circ\mu_\Lambda$ as $\fs(\mu_\Lambda([g_1;h_1;g_2;h_2]))=([g_1^\fs;h_1;g_2^\fs;h_2])$ where the superscript $\fs$ represents the choice of the section $\fs$. Notice that $h_1,h_2$ do not change since the moment map $\mu_\Lambda$ is defined using $h_1$ and $h_2$.

Now since $[g_1;h_1;g_2;h_2]$ and $[g_1^\fs;h_1;g_2^\fs;h_2]$ are in the same fiber, and $T^3_\Lambda$ acts freely on each fiber, there exists a unique $(e^{\i\lambda_1},e^{\i\lambda_2},e^{\i\lambda_3})\in T^3\cong T^3_\Lambda$, $\lambda_i\in\bR$, such that $$[e^{\lambda_3 X}g^\fs_1 e^{\lambda_1 \xi_1};h_1; e^{\lambda_3 Y}g^\fs_2 e^{\lambda_2\xi_2} ;h_2]=[g_1;h_1;g_2;h_2],$$
where $h_1=e^{\xi_1},\ h_2=e^{\xi_2},\ X=h_2h_1-(h_2h_1)^{-1},\ Y=h_1h_2-(h_1h_2)^{-1}$ . Then we define the involution to be
\[\tau([g_1;h_1;g_2;h_2])=[e^{-\lambda_3 X}g^\fs_1 e^{-\lambda_1 \xi_1};h_1; e^{-\lambda_3 Y}g^\fs_2 e^{-\lambda_2\xi_2} ;h_2] \]
One can check directly that this is an involution and that this involution is compatible with the $T^3_\Lambda$ action in the sense of Duistermaat, i.e.
 $$\tau((e^{\i\theta_1},e^{\i\theta_2},e^{\i\theta_3})\cdot [g_1;h_1;g_2;h_2])=(e^{-\i\theta_1},e^{-\i\theta_2},e^{-\i\theta_3})\cdot \tau([g_1;h_1;g_2;h_2])$$
 for all $(e^{\i\theta_1},e^{\i\theta_2},e^{\i\theta_3})\in T^3\cong T^3_\Lambda$ and $[g_1;h_1;g_2;h_2]\in\M^\circ$.

 In fact,Figure 2 shows clearly
 why such $\tau$ is an involution on $\M^\circ$
and why it is compatible with the $T^3_\Lambda$ action. This is because $\tau$ is essentially defined by this involution $$(t_1,t_2,t_3,x_1,x_2,x_3)\mapsto (t_1^{-1},t_2^{-1},t_3^{-1},x_1,x_2,x_3)$$ on $T^3_\Lambda\times\Delta^\circ$ and $\sigma(t)$ in Figure 2 is simply $\sigma(t_1,t_2,t_3)=(t_1^{-1},t_2^{-1},t_3^{-1})$. Moreover, since $\M^\circ\cong T^3_\Lambda\times \Delta^\circ$ is a completely integrable system, we see that $\tau$ is indeed anti-symplectic as \(\displaystyle \sum_{i=1}^3 dt_i\wedge dx_i\) becomes $\displaystyle -\sum_{i=1}^3 dt_i\wedge dx_i$. Thus, the involution $\tau$ we defined here satisfies all of Duistermaat's conditions.

\begin{figure}[h]\label{f:tau}
\scalebox{0.4}{\includegraphics{involution.pdf}}
\caption{The involution $\tau$}
\end{figure}

From the construction, we see that this is the only involution (unique up to the choice of the section $\fs$) that can satisfy Duistermaat's condition with respect to our torus action, because in order to have the generating vector field going in the opposite direction, $(t_1,t_2,t_3)$ must go to $(t_1^{-1},t_2^{-1},t_3^{-1})$, so the only variations are the different choices of global sections (i.e. trivializations or ways to give the coordinate systems). This completes our claim.

O'Shea-Sjamaar generalized Duistermaat's result to compact Lie group $G$ as follows. Consider a compact connected symplectic manifold $(M,\omega)$ with a Hamiltonian $G$ action. Let $\tau$ be an anti-symplectic involution on $M$ and $\sigma$ be an involution on $G$. O'Shea-Sjamaar called the involution $(\tau,\sigma)$ compatible with the Hamiltonian $G$-action if $\tau(g\cdot x)=\sigma(g)\cdot \tau(x)$ for all $g\in G$ and $x\in M$ together with a moment map condition $\mu(\tau(x))=-\sigma(\mu(x))$. Under these assumptions, they showed that the moment map image of the fixed point set $M^\tau$ is the same as the moment map image of the whole manifold $M$, just as in Duistermaat's situation.
In particular, if $G$ is connected, then the moment map condition automatically follows by the condition $\tau(g\cdot x)=\sigma(g)\cdot \tau(x)$. When $G$ is an $n$-torus $T^n$, involutions $\sigma$ on $G$ are just a combination of Duistermaat's condition $t\mapsto t^{-1}$ and $(t_1,\cdots,t_i,\cdots,t_j,\cdots,t_n)\in T^n \to (t_1,\cdots,t_j,\cdots,t_i,\cdots,t_n)\in T^n$.
In our case, the $T^3$-action only acts on the first and third coordinates $g_1$, $g_2$ of points in the moduli space $\{[g_1;h_1;g_2;h_2]\}$. Thus, there exists no $\tau$ that is anti-symplectic and compatible with $\sigma$, if $\sigma:(t_1,t_2,t_3)\to(t_2,t_1,t_3)$ or $(t_3,t_2,t_1)$ or $(t_1,t_3,t_2)$. Thus, even when we relax the condition to O'Shea-Sjamaar's condition, our result remains the same, as the only compatible ones will be compatible in the sense of Duistermaat.

\subsection{Other kind of involutions, an example}
$ $

In this section, we would like to investigate another interesting involution $[g_1;h_1;g_2;h_2]\mapsto[h_2;g_2;h_1;g_1]$ on the moduli space $\M$.  It is anti-symplectic, but it is not compatible with the Hamiltonian torus action defined by the Goldman flow, in the sense of Duistermaat. We show that the fixed point set of such an involution is homeomorphic to $\P^3(\bR)$, which is the fixed point set of the natural anti-symplectic involution on $\P^3(\C)$ with respect to the Fubini-Study form. This suggests that there might be another moment  map whose Hamiltonian flows would be compatible with this involution. Moreover, since the involution is anti-symplectic, the fixed point set is Lagrangian(wherever the symplectic form is defined), where $\P^3(\bR)$ is also a Lagrangian submanifold of $\P^3(\bC)$.

Define an involution on $\Hom(\pi_1(\Si),K)$ by $\sigma:(g_1,h_1,g_2,h_2)\to (h_2,g_2,h_1,g_1)$. It induces an involution, also denoted by $\sigma$, on the moduli space $\M$, and if $[g_1;h_1;g_2;h_2]\in\M$ is fixed under $\sigma$, then there exists some $k\in K$ such that
$k\cdot(g_1,h_1,g_2,h_2)=(h_2,g_2,h_1,g_1)$.
In other words, $k^2\in K_{h_1}\cap K_{g_1}\cap K_{h_2}\cap K_{g_2}$, where $K_a$ denotes the stabilizer of $a$ in $K$.

Since $K=\SU(2)$, there are only three cases for the intersection of these stabilizers:
\begin{eqnarray*}
&& (I)~ K_{h_1}\cap K_{g_1}\cap K_{h_2}\cap K_{g_2}=Z(K) \\
&& (II)~ K_{h_1}\cap K_{g_1}\cap K_{h_2}\cap K_{g_2}=T \quad \mbox{for some maximal torus $T$} \\
&& (III)~ K_{h_1}\cap K_{g_1}\cap K_{h_2}\cap K_{g_2}=K
\end{eqnarray*}
By direct calculation, the fixed point set $\M^\sigma$ decomposes into three strata according to their stabilizers,
cases $(I), (II), (III)$, then $\M^\sigma=\M^\sigma_{(I)}\bigcup \M^\sigma_{(II)} \bigcup \M^\sigma_{(III)}$ where
\begin{eqnarray*}
\M^\sigma_{(I)}&=&\{(g,h,khk^{-1},kgk^{-1})\in K^4\mid [g,h]=k^{-1}[g,h]k,k^{2}\in Z(K),\\
&& \quad K_g\cap K_h\cap K_{kgk^{-1}}\cap K_{khk^{-1}}=Z(K)\}/K \\
\M^\sigma_{(II)}&=&\{(g,h,khk^{-1},kgk^{-1})\in K^4\mid [g,h]=k^{-1}[g,h]k, k^{2}\in T,\\
&& \quad K_g\cap K_h\cap K_{kgk^{-1}}\cap K_{khk^{-1}}=T, \mbox{for any } T\}/K \\
\M^\sigma_{(III)}&=&\{[I;I;I;I],[I;-I;-I;I],[-I;I;I;-I],[-I;-I;-I;-I]\} .
\end{eqnarray*}

Since $\SU(2)$ admits property CI (ref:\cite{S}), a representation in $\cM$ is irreducible
(in other words the stabilizer has the same dimension as the center) iff it is {\em good} (i.e. stabilizer is the center) in the sense of Johnson and Millson (ref:\cite{JM}).
So the copy $\M^\sigma_{(I)}$ contains only the irreducible representations, i.e. $\cM^\sigma_{(I)}\subset \cM^i$ and $\M^\sigma_{(II)}\bigcup\M^\sigma_{(III)}$ contains only abelian representations, i.e. $\M^\sigma_{(II)}\bigcup\M^\sigma_{(III)}\subset (\cM\setminus \cM^i)$.
However, to understand the topology of the fixed point set $\M^\sigma$, it is better to describe it differently as
$\M^\sigma=\cN_1\bigcup \cN_2$, where

\begin{eqnarray*}
\cN_1&=&\{(g,h,khk^{-1},kgk^{-1})\in K^4\mid~ [g,h]=k^{-1}[g,h]k,~ [g,h]\neq I,~ k^2\in Z(K)\}/K \\
&=& \{(g,h,h,g)\in K^4\mid~ [g,h]\neq I\}/K \\
&& \bigcup \{(g,h,khk^{-1},kgk^{-1})\in K^4\mid~ [g,h]=k^{-1}[g,h]k,~ [g,h]\neq I,~ k^2=-I\}/K
\end{eqnarray*}
and
\begin{eqnarray*}
\cN_2&=&\{(g,h,khk^{-1},kgk^{-1})\in K^4\mid~ [g,h]=I,~k^2=-I\}/K
\end{eqnarray*}

Notice that $\cN_1$ is a proper subset of $\M^\sigma_{(I)}$, while $\cN_2$ contains $\M^\sigma_{(II)}\cup \M^\sigma_{(III)}$ and some points in $\M^\sigma_{(I)}$. Though it looks like $\M^\sigma$ is disconnected with several connected components, we will show later that $\M^\sigma$ is one connected set.

The main result of this section is the following:
\begin{thm} \label{thm:RP3}
The fixed point set $\M^\sigma$ is homeomorphic to $\P^3(\bR)$.
\end{thm}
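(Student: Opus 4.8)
The plan is to analyze $\M^\sigma$ directly, stratum by stratum, using the decomposition $\M^\sigma=\cN_1\cup\cN_2$, rather than trying to match $\sigma$ with complex conjugation under the Narasimhan--Ramanan isomorphism $\M\cong\P^3(\C)$: that isomorphism is not explicit, and $\sigma$ is not compatible with the Goldman torus action (it sends $\{[h_1,h_2]\neq I\}$ to $\{[g_1,g_2]\neq I\}$), so the Duistermaat machinery of the previous subsection does not apply. The goal is to identify each stratum with an explicit cell or bundle, compute how they glue, and match the result with the standard decomposition $\P^3(\R)=D^3\cup_{S^2}N$, where $N$ is the twisted interval bundle over $\P^2(\R)$ whose boundary $S^2$ double covers the zero section (equivalently, to realize $\M^\sigma$ as a closed $3$-ball with antipodal identification on its boundary sphere). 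Since $\M$ is compact and $\P^3(\R)$ is Hausdorff, it suffices to produce a continuous bijection, so the argument can be organized around building such a map piece by piece.

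First I would coordinatize the top-dimensional stratum. The locus $\{(g,h,h,g):[g,h]\neq I\}/K$ is exactly the irreducible part of $\Hom(F_2,K)/K$, which by the Proposition identifying $\Hom(F_2,K)/K$ with $\tilde\Delta$ is the interior $\tilde\Delta^\circ$, an open $3$-ball; passing to closures, the map $(g,h)\mapsto[(g,h,h,g)]$ identifies the whole closed tetrahedron $\tilde\Delta$ with a closed $3$-ball $D^3\subset\M^\sigma$ whose boundary $S^2=\partial\tilde\Delta$ consists of the abelian representations $[(g,h,h,g)]$ with $[g,h]=I$. This $D^3$ is to become the affine cell of $\P^3(\R)$. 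All remaining strata carry the data of an element $k$ with $k^2=-I$, i.e.\ a point of the $2$-sphere $\{k\in\SU(2):\tr k=0\}\cong S^2$, and two features drive the analysis: the representation depends on $k$ only up to the antipodal involution $k\mapsto-k$ and up to the residual centralizer $K_g\cap K_h$; and over the distinguished central point of $\tilde\Delta$, where $[g,h]=-I$, the centralizer condition on $k$ becomes vacuous, so the whole $S^2$ survives and descends to $S^2/\{\pm1\}=\P^2(\R)$. I would show this $\P^2(\R)$ (which lies in the $k^2=-I$ part of $\cN_1$) is the core of $N$, and that the complement $Y:=\M^\sigma\setminus\operatorname{int}D^3$ is homeomorphic to the twisted interval bundle over it, with $\partial Y=S^2=\partial\tilde\Delta$ realized inside $\cN_2$.

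The crucial computation is the limiting behavior of the two sheets lying over $\tilde\Delta^\circ$ (the sheet $k=\pm I$, namely Part A, and the sheet $k^2=-I$, namely Part B) as $[g,h]\to-I$ and as $[g,h]\to I$. As the commutator tends to $-I$, the centralizer axis of $[g,h]$ sweeps out all directions, so the $k^2=-I$ sheet collapses onto the central $\P^2(\R)$ by a two-to-one antipodal map; as the commutator tends to $I$, the centralizer axis tends to lie in the plane orthogonal to the common torus, so $k$ inverts that torus and the two sheets limit to the abelian representations $[(g,h,h,g)]$ and $[(g,h,h^{-1},g^{-1})]$ in $\cN_2$, which is how $Y$ is attached to $D^3$ along $S^2$. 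Assembling, I would verify that the attaching map $\partial D^3=S^2\to\P^2(\R)$ is the standard double cover, so that $\M^\sigma=D^3\cup_{S^2}N\cong\P^3(\R)$; connectivity of $\M^\sigma$ (which looks disconnected from the $\cN_1$--$\cN_2$ splitting) then follows automatically, since $D^3$, $Y$, and their common $S^2$ are each connected and meet.

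The main obstacle is precisely this gluing analysis across the central and boundary strata: exactly where the generic two-point fiber of the would-be double cover degenerates into a $\P^2(\R)$, one must check both that the glued space is a topological manifold and that the identification is the antipodal one. This is what distinguishes $\P^3(\R)$ from $S^3$ or from a lens space, and it is the step demanding the most careful bookkeeping of the $k\mapsto-k$ symmetry together with the residual conjugation by $K_g\cap K_h$. A useful independent check along the way is to compute $H_*(\M^\sigma)$ from the stratification and confirm that it agrees with $H_*(\P^3(\R))$, which would both guard against an erroneous gluing and corroborate the final homeomorphism.
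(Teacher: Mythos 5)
Your overall route is the same as the paper's: you use its decomposition $\M^\sigma=\cN_1\cup\cN_2$, identify the closed Goldman pillow $\{(g,h,h,g)\}/K$ with a closed $3$-ball $D^3$, exhibit the $\P^2(\bR)$ sitting over the central point where $[g,h]=-I$ (the paper's Lemmas \ref{lm:N1_2} and \ref{lm:N1_3}), note the generic two-sheet structure over $\tilde\Delta^\circ$ (the paper's Lemma \ref{lm:N1_1}), and assemble $\M^\sigma\cong D^3\cup_{S^2}N$ with $N$ the twisted interval bundle. Your two limit computations are correct and are an attractive replacement for the paper's normal-form matrix calculations: as $[g,h]\to -I$ the centralizer axis sweeps out all directions, giving the blow-up structure, and as $[g,h]\to I$ the axis becomes orthogonal to the common torus, so $k$ inverts it and the $k^2=-I$ sheet limits onto $[(g,h,h^{-1},g^{-1})]$.

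There is, however, a genuine gap: you never determine what $\M^\sigma$ actually is over the abelian locus $\partial\tilde\Delta$, and your analysis misses fixed points there. For a commuting pair $(g,h)\neq(\pm I,\pm I)$ lying in a maximal torus $T'$, the constraint $k\in K_{[g,h]}$ is vacuous (since $[g,h]=I$), so \emph{every} $k$ with $k^2=-I$ yields a fixed point $[(g,h,khk^{-1},kgk^{-1})]$; modulo the residual action (conjugation by $T'$, the Weyl flip, and $k\mapsto -k$) the fiber over $(g,h)$ is $S^2/S^1$, a closed interval --- not just the two endpoints that your sheet-limit computation produces. The interior points of these intervals (those $k$ neither centralizing nor inverting $T'$) are irreducible fixed points belonging to neither $D^3$ nor your blown-up ball; they are precisely what joins the two boundary spheres, makes $Y=\M^\sigma\setminus\operatorname{int}D^3$ a (collared) twisted interval bundle, and makes $\M^\sigma$ connected. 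This is exactly the content of the paper's Lemma \ref{lm:N2}, which your proposal has no substitute for: without it the assertion ``$Y\cong N$'' and the double-cover attaching-map verification cannot even be set up, because the set $\M^\sigma$ itself has not been fully enumerated. The same lemma also records that these interval fibers degenerate to points at the four vertices $(\pm I,\pm I)$, a degeneration your clean claims ``$\partial Y=S^2$'' and ``$\M^\sigma=D^3\cup_{S^2}N$'' must additionally accommodate (e.g.\ by collapsing four tame arcs), and which your proposal never mentions.
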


We start with the set $\cN_1$. Recall that
the quotient space $\{(g,h,h,g)\in K^4\}/K$ is a {\em fat pillow} with four vertices (ref:  Goldman \cite{Go84} \cite{Go07}), which we will refer back as the {\em Goldman pillow} (Figure 1 of Goldman \cite{Go07}) in the rest of this section. Topologically this is just a three-dimensional closed ball $B^3$. The interior of the pillow are those points with condition $[g,h]\neq I$, and the surface of the pillow are those points with condition $[g,h]=I$, while the four vertices are $[I;I;I;I]$, $[I;-I;-I;I]$, $[-I;I;I;-I]$, $[-I;-I;-I;-I]$. See  Goldman (\cite{Go84}) for more details about the pillow.
So the first piece in $\cN_1$ is the interior of this pillow.

Next we want to show that the second piece in the description of $\cN_1$ above is in one to one correspondence with the first piece except at one special point (in the interior of the pillow):
\begin{lm}\label{lm:N1_1}
There exists a one-to-one correspondence between
\[\{(g,h,h,g)\in K^4\mid~ [g,h]\neq \pm I\}/K\] and
\[\{(g,h,khk^{-1},kgk^{-1})\in K^4\mid~ [g,h]=k^{-1}[g,h]k,~ [g,h]\neq \pm I,~ k^2=-I\}/K\]
\end{lm}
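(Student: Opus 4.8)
The plan is to exhibit the correspondence explicitly and reduce everything to a single structural fact about $\SU(2)$: the centralizer of a non-central element is a maximal torus, and such a torus contains, up to sign, a unique trace-free element. First I would set up the forward map. Given a class $[g;h;h;g]$ with $[g,h]\neq\pm I$, the element $[g,h]$ is non-central, so its stabilizer $K_{[g,h]}$ is a maximal torus $T_{g,h}$, i.e.\ a circle in $\SU(2)$. Writing $T_{g,h}=\{\cos t\, I+\sin t\,\hat n : t\in\bR\}$ where $\hat n\in\SU(2)$ is trace-free with $\hat n^2=-I$, the trace-free elements of $T_{g,h}$ are exactly $\pm\hat n$, both satisfying $(\pm\hat n)^2=-I$, and conjugation by $\hat n$ agrees with conjugation by $-\hat n$. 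Thus there is a canonical (up to sign) $k_0=k_0(g,h)\in K_{[g,h]}$ with $k_0^2=-I$, and I define
\[
\Phi([g;h;h;g])=[g;h;k_0hk_0^{-1};k_0gk_0^{-1}].
\]

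Next I would check that $\Phi$ is well defined and lands in the second set. Replacing the representative $(g,h)$ by $(aga^{-1},aha^{-1})$ replaces $[g,h]$ by $a[g,h]a^{-1}$ and hence $k_0$ by $ak_0a^{-1}$, so the image is conjugated by $a$ and its class is unchanged; the sign ambiguity in $k_0$ disappears for the same reason. By construction $k_0\in K_{[g,h]}$ gives $[g,h]=k_0^{-1}[g,h]k_0$, together with $k_0^2=-I$ and $[g,h]\neq\pm I$, so $\Phi([g;h;h;g])$ satisfies every defining condition of the target set.

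For the inverse I would use the forgetful map $\Psi([g;h;g_2;h_2])=[g;h;h;g]$, which is manifestly well defined on classes. Clearly $\Psi\circ\Phi=\mathrm{id}$, since $\Phi$ does not alter the first two coordinates. For $\Phi\circ\Psi=\mathrm{id}$, take any class $[g;h;khk^{-1};kgk^{-1}]$ in the second set; the constraints force $k\in K_{[g,h]}$ and $k^2=-I$ with $[g,h]\neq\pm I$, so by the structural fact $k=\pm k_0$, whence $khk^{-1}=k_0hk_0^{-1}$ and $kgk^{-1}=k_0gk_0^{-1}$. Thus applying $\Phi$ after $\Psi$ returns the same class, and $\Phi,\Psi$ are mutually inverse bijections.

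The only real obstacle is the classification of admissible $k$: everything hinges on showing that for $[g,h]\neq\pm I$ the system $k\in K_{[g,h]}$, $k^2=-I$ has, up to sign, a unique solution. This is precisely where the hypothesis $[g,h]\neq\pm I$ (rather than merely $[g,h]\neq I$) is essential and where the special structure of $\SU(2)$ enters. If $[g,h]=-I$ the centralizer would be all of $K$ and the trace-free square roots of $-I$ would sweep out a $2$-sphere, destroying the uniqueness of $k_0$; this degenerate fiber is exactly the excluded special point referred to above, which is why the lemma restricts to $[g,h]\neq\pm I$ on both sides.
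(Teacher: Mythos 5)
Your proof is correct and takes essentially the same approach as the paper's: both rest on the single structural fact that for $[g,h]\neq \pm I$ the stabilizer $K_{[g,h]}$ is a maximal torus meeting the set $\{k\in K : k^2=-I\}$ in exactly two points $\pm k_0$, which induce the same conjugation, so that $[g;h;h;g]\mapsto [g;h;k_0hk_0^{-1};k_0gk_0^{-1}]$ is a bijection. Your write-up simply makes explicit the forward and forgetful maps and their well-definedness on conjugacy classes, which the paper leaves implicit.
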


\begin{proof}
If $[g,h]\neq \pm I$, then the stabilizer of $[g,h]$ is a maximal torus $T$, which intersects with the conjugacy class of $\left(
\begin{smallmatrix}
i & 0
\\
0 & -i
\end{smallmatrix}
\right)$ (i.e. the condition $k^2=-I$) at exactly two points $k$ and $-k$, i.e. there exist a unique $k$ (up to $\pm 1$) conjugate to $\left(
\begin{smallmatrix}
i & 0
\\
0 & -i
\end{smallmatrix}
\right)$ such that $[g,h]=k^{-1}[g,h]k$. In other words, $[g;h;h;g]$ and $[g;h;khk^{-1};kgk^{-1}]$ are in one to one correspondence in these two sets respectively.
\end{proof}

Now, we look at the special points with condition $[g,h]=-I$. We will show that the set $\{(g,h,h,g)\in K^4\mid [g,h]=-I\}/K$ is just one point, while the set $\{(g,h,khk^{-1},kgk^{-1})\in K^4\mid [g,h]=-I,k^{2}=-I\}/K $ is homeomorphic to $\P^2(\bR)$.

\begin{lm}\label{lm:N1_2}
The subset $\{(g,h,h,g)\in K^4\mid [g,h]=- I\}/K$ in the interior of the Goldman pillow is just one point. In fact, it is the point that maps to $(0,0,0)$ under the Goldman map $\Phi:~[(g_1,h_1,g_2,h_2)]\mapsto (\tr (h_1), \tr (h_2), \tr (h_1h_2))$.
\end{lm}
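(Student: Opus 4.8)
The plan is to analyze directly the equation $[g,h] = -I$ in $K = \SU(2)$ and show that it forces $g$ and $h$ into a rigid configuration that is unique up to conjugation. First I would recall that $[g,h] = ghg^{-1}h^{-1} = -I$ is equivalent to the anticommutation relation $gh = -hg$. In $\SU(2)$ this is a strong constraint: writing elements as unit quaternions $g = \cos\alpha + \sin\alpha\,\hat{u}$ and $h = \cos\beta + \sin\beta\,\hat{v}$ (with $\hat{u},\hat{v}$ pure imaginary unit quaternions), the relation $gh = -hg$ forces $g$ and $h$ both to be trace-free, i.e. $\alpha = \beta = \pi/2$, so that $g = \hat{u}$ and $h = \hat{v}$ are pure imaginary unit quaternions, and moreover forces $\hat{u} \perp \hat{v}$. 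This is the computational heart of the argument, and it is the step I expect to be the main obstacle: one must verify that $gh = -hg$ admits no solutions with nonzero scalar part and then extract the orthogonality condition $\hat{u}\cdot\hat{v} = 0$ cleanly.

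Once the constraint is identified, I would invoke the transitivity of the conjugation action of $\SU(2) \cong \mathrm{Sp}(1)$ on ordered orthonormal pairs of pure imaginary unit quaternions. Since any two such orthogonal pairs $(\hat{u},\hat{v})$ and $(\hat{u}',\hat{v}')$ are related by an element of $\mathrm{SO}(3) = \SU(2)/\{\pm I\}$ acting by conjugation, the pair $(g,h)$ is conjugate to the standard pair, for instance $g = \left(\begin{smallmatrix} i & 0 \\ 0 & -i \end{smallmatrix}\right)$ and $h = \left(\begin{smallmatrix} 0 & 1 \\ -1 & 0 \end{smallmatrix}\right)$. Therefore the class $[(g,h,h,g)]$ is a single point in the quotient $\{(g,h,h,g) \in K^4\}/K$, which establishes the first assertion of the lemma.

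For the second assertion, I would simply evaluate the Goldman map $\Phi$ on the representative pair. Since $g$ and $h$ are pure imaginary quaternions, both are trace-free, so $\tr(h_1) = \tr(g) = 0$ and $\tr(h_2) = \tr(h) = 0$; and since $gh = -hg$ with $g,h$ anticommuting pure imaginaries, the product $gh$ is again pure imaginary (the product of two orthogonal imaginary unit quaternions is itself an imaginary unit quaternion), so $\tr(h_1h_2) = \tr(gh) = 0$ as well. Hence $\Phi([(g,h,h,g)]) = (0,0,0)$, and since this point lies in the interior of the Goldman pillow (it satisfies $[g,h] = -I \neq I$), the point is interior as claimed. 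This completes the proof plan; the only genuinely delicate part is the quaternionic bookkeeping in the first paragraph, and everything downstream is a direct verification once the normal form $g = \hat{u}$, $h = \hat{v}$, $\hat{u} \perp \hat{v}$ is in hand.
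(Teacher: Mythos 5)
Your proposal is correct, but it takes a genuinely different route from the paper. The paper argues by explicit matrix computation: it conjugates $g$ into the diagonal torus, writes $h=\left(\begin{smallmatrix}\alpha&\beta\\-\bar\beta&\bar\alpha\end{smallmatrix}\right)$, solves $[g,h]=-I$ entrywise to force $\alpha=0$ and $e^{2i\theta}=-1$ (so $g=\left(\begin{smallmatrix}i&0\\0&-i\end{smallmatrix}\right)$ and $h$ antidiagonal), and then uses the residual stabilizer of $g$ (the diagonal torus) to rotate the phase of $h$ to the standard representative; it does this in two claims, first identifying $\Phi^{-1}(0,0,0)$ among pillow points as a single class and then showing the locus $[g,h]=-I$ coincides with it. You instead pass to the quaternion model: $[g,h]=-I$ is anticommutation $gh=-hg$, which forces $g,h$ to be orthogonal pure imaginary unit quaternions, and then transitivity of $\mathrm{SO}(3)=\SU(2)/\{\pm I\}$ on ordered orthonormal $2$-frames in $\bR^3$ gives a single conjugation orbit; evaluating $\Phi$ on a representative gives $(0,0,0)$. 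Your argument is shorter and more conceptual, and the frame-transitivity step replaces the paper's two-stage normalization cleanly. What the paper's computational route buys is twofold: the explicit normal form it produces is reused verbatim in the proof of Lemma \ref{lm:N1_3} and in the analysis of $\cN_2$, and its first claim actually proves the slightly stronger statement that $(0,0,0)$ has a \emph{unique} preimage among pillow points, which is what justifies the definite article in ``the point that maps to $(0,0,0)$.'' Your write-up only proves the forward implication ($[g,h]=-I$ implies $\Phi=(0,0,0)$); to match the lemma's full statement you should add the converse, which in your framework is a one-liner: if $\tr(g)=\tr(h)=0$ then $g=\hat u$, $h=\hat v$ are pure imaginary units and $\tr(gh)=-2\,\hat u\cdot\hat v$, so $\Phi=(0,0,0)$ forces $\hat u\perp\hat v$, i.e.\ $[g,h]=-I$, placing the fiber inside the single orbit you already identified.
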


\begin{proof}
$ $

\begin{enumerate}
\item Claim $\Phi^{-1}(0,0,0)\cap \{(g,h,h,g)\in K^4\mid g,h\in K\}/K=[\left(
\begin{smallmatrix}
i & 0
\\
0 & -i
\end{smallmatrix}
\right);  \left(
\begin{smallmatrix}
0 & -1
\\
1 & 0
\end{smallmatrix}
\right);\left(
\begin{smallmatrix}
0 & -1
\\
1 & 0
\end{smallmatrix}
\right)     ;\left(
\begin{smallmatrix}
i & 0
\\
0 & -i
\end{smallmatrix}
\right)]$ is just a point:

Suppose $[g;h;h;g]\in \Phi^{-1}(0,0,0)$. Then trace$(g)=0=$ trace$(h)=$ trace$(gh)$.
Since for $\SU(2)$, trace completely determines the conjugacy classes, there exists $k\in \SU(2)$ such that $kgk^{-1}=\left(
\begin{smallmatrix}
i & 0
\\
0 & -i
\end{smallmatrix}
\right)$. So $$[g;h;h;g]=[kgk^{-1};khk^{-1};khk^{-1};kgk^{-1}]=[
\left(
\begin{smallmatrix}
i & 0
\\
0 & -i
\end{smallmatrix}
\right);khk^{-1};khk^{-1};
\left(
\begin{smallmatrix}
i & 0
\\
0 & -i
\end{smallmatrix}
\right) ],$$ and since trace$(h)=0=$ trace$(gh)$, we know that trace$(khk^{-1})=0=$ trace$(kgk^{-1}khk^{-1})$ also.

Any element of $\SU(2)$ with zero trace is of the form $\{\left(
\begin{smallmatrix}
ci & z
\\
-\bar{z} & -ci
\end{smallmatrix}
\right)\mid c^2+|z|^2=1\}$.
Denote $khk^{-1}$ by this general form $\left(
\begin{smallmatrix}
ci & z
\\
-\bar{z} & -ci
\end{smallmatrix}
\right)$ for some $c\in \bR,\ z\in\bC$ and $c^2+|z|^2=1$, then trace$(kgk^{-1}khk^{-1})=0$ implies that $0=$ trace$(\left(
\begin{smallmatrix}
i & 0
\\
0 & -i
\end{smallmatrix}
\right)\left(
\begin{smallmatrix}
ci & z
\\
-\bar{z} & -ci
\end{smallmatrix}
\right))=$ trace ($\left(
\begin{smallmatrix}
-c & iz
\\
i\bar{z} & -c
\end{smallmatrix}
\right) )=0$. Thus $khk^{-1}=\left(
\begin{smallmatrix}
0 & e^{i\theta}
\\
-e^{-i\theta} & 0
\end{smallmatrix}
\right)$.

The stabilizer of $\left(
\begin{smallmatrix}
i & 0
\\
0 & -i
\end{smallmatrix}
\right)$ is this maximal torus $\{\left(
\begin{smallmatrix}
e^{it} & 0
\\
0 & e^{-it}
\end{smallmatrix}
\right)\}$. Let $e^{it}=-e^{-i\theta/2}$, then
\[\left(
\begin{smallmatrix}
e^{it} & 0
\\
0 & e^{-it}
\end{smallmatrix}
\right)\left(
\begin{smallmatrix}
0 & e^{i\theta}
\\
-e^{-i\theta} & 0
\end{smallmatrix}
\right)\left(
\begin{smallmatrix}
e^{-it} & 0
\\
0 & e^{it}
\end{smallmatrix}
\right)=\left(
\begin{smallmatrix}
0 & e^{i\theta}e^{2it}
\\
-e^{-i\theta}e^{-2it} & 0
\end{smallmatrix}
\right)=\left(
\begin{smallmatrix}
0 & -1
\\
1 & 0
\end{smallmatrix}
\right).\]
i.e. $\left(
\begin{smallmatrix}
e^{it} & 0
\\
0 & e^{-it}
\end{smallmatrix}
\right)$ fixes $\left(
\begin{smallmatrix}
i & 0
\\
0 & -i
\end{smallmatrix}
\right)$ and conjugates $\left(
\begin{smallmatrix}
0 & e^{i\theta}
\\
-e^{-i\theta} & 0
\end{smallmatrix}
\right)$ to $\left(
\begin{smallmatrix}
0 & -1
\\
1 & 0
\end{smallmatrix}
\right)$.
We conclude that for any $(g,h,h,g)\in K^4$ satisfying $\Phi([(g,h,h,g)])=(0,0,0)$, it can always be conjugated to
$(\left(
\begin{smallmatrix}
i & 0
\\
0 & -i
\end{smallmatrix}
\right),  \left(
\begin{smallmatrix}
0 & -1
\\
1 & 0
\end{smallmatrix}
\right),\left(
\begin{smallmatrix}
0 & -1
\\
1 & 0
\end{smallmatrix}
\right)     ,\left(
\begin{smallmatrix}
i & 0
\\
0 & -i
\end{smallmatrix}
\right))$. In other words, $$\Phi^{-1}(0,0,0)\cap \{(g,h,h,g)\in K^4\}/K=[\left(
\begin{smallmatrix}
i & 0
\\
0 & -i
\end{smallmatrix}
\right);  \left(
\begin{smallmatrix}
0 & -1
\\
1 & 0
\end{smallmatrix}
\right); \left(
\begin{smallmatrix}
0 & -1
\\
1 & 0
\end{smallmatrix}
\right)     ;\left(
\begin{smallmatrix}
i & 0
\\
0 & -i
\end{smallmatrix}
\right)].$$

\item Claim $\{(g,h,h,g)\in K^4\mid [g,h]=-I\}/K=[\left(
\begin{smallmatrix}
i & 0
\\
0 & -i
\end{smallmatrix}
\right);  \left(
\begin{smallmatrix}
0 & -1
\\
1 & 0
\end{smallmatrix}
\right);\left(
\begin{smallmatrix}
0 & -1
\\
1 & 0
\end{smallmatrix}
\right) ;    \left(
\begin{smallmatrix}
i & 0
\\
0 & -i
\end{smallmatrix}
\right)]$:

If $[g,h]=-I$, then $g,h\notin Z(K)$ so there exists $k\in K$ such that $kgk^{-1}=\left(
\begin{smallmatrix}
e^{i\theta} & 0
\\
0 & e^{-i\theta}
\end{smallmatrix}
\right)$ for some $\theta\in (0,\pi)$.
So any $[g;h;h;g]$ with $[g,h]=-I$ satisfies
\[ [g;h;h;g]=[kgk^{-1};khk^{-1};khk^{-1};kgk^{-1}]=
[\left(
\begin{smallmatrix}
e^{i\theta} & 0
\\
0 & e^{-i\theta}
\end{smallmatrix}
\right);khk^{-1};khk^{-1};\left(
\begin{smallmatrix}
e^{i\theta} & 0
\\
0 & e^{-i\theta}
\end{smallmatrix}
\right)], \]  where $[\left(
\begin{smallmatrix}
e^{i\theta} & 0
\\
0 & e^{-i\theta}
\end{smallmatrix}
\right), khk^{-1}]=k[g,h]k^{-1}=[g,h]=-I$.

Let $khk^{-1}=\left(
\begin{smallmatrix}
\alpha & \beta
\\
-\bar{\beta} & \bar{\alpha}
\end{smallmatrix}
\right)\in \SU(2)$. Then $[\left(
\begin{smallmatrix}
e^{i\theta} & 0
\\
0 & e^{-i\theta}
\end{smallmatrix}
\right), \left(
\begin{smallmatrix}
\alpha & \beta
\\
-\bar{\beta} & \bar{\alpha}
\end{smallmatrix}
\right)]=-I$ gives $\left(
\begin{smallmatrix}
\alpha & \beta e^{2i\theta}
\\
-\bar{\beta}e^{-2i\theta} & \bar{\alpha}
\end{smallmatrix}
\right)=\left(
\begin{smallmatrix}
-\alpha & -\beta
\\
\bar{\beta} & \bar{\alpha}
\end{smallmatrix}
\right)$, i.e. $\alpha=0$ and $e^{2i\theta}=-1$, which implies that $khk^{-1}=\left(
\begin{smallmatrix}
0 & e^{it}
\\
-e^{-it} & 0
\end{smallmatrix}
\right)$ for some $t\in [0,\pi]$ and $kgk^{-1}=\left(
\begin{smallmatrix}
i & 0
\\
0 & -i
\end{smallmatrix}
\right)$.

Thus, as before, using only the stabilizer of $\left(
\begin{smallmatrix}
i & 0
\\
0 & -i
\end{smallmatrix}
\right)$, we may conjugate $\left(
\begin{smallmatrix}
0 & e^{it}
\\
-e^{-it} & 0
\end{smallmatrix}
\right)$ into $\left(
\begin{smallmatrix}
0 & -1
\\
1 & 0
\end{smallmatrix}
\right)$. In other words, for any $(g,h,h,g)$ satisfying $[g,h]=-I$, we can always conjugate it to $(\left(
\begin{smallmatrix}
i & 0
\\
0 & -i
\end{smallmatrix}
\right),  \left(
\begin{smallmatrix}
0 & -1
\\
1 & 0
\end{smallmatrix}
\right),\left(
\begin{smallmatrix}
0 & -1
\\
1 & 0
\end{smallmatrix}
\right)     ,\left(
\begin{smallmatrix}
i & 0
\\
0 & -i
\end{smallmatrix}
\right))$. This shows that $$\{(g,h,h,g)\in K^4\mid [g,h]=-I\}/K=[(\left(
\begin{smallmatrix}
i & 0
\\
0 & -i
\end{smallmatrix}
\right),  \left(
\begin{smallmatrix}
0 & -1
\\
1 & 0
\end{smallmatrix}
\right),\left(
\begin{smallmatrix}
0 & -1
\\
1 & 0
\end{smallmatrix}
\right)     ,\left(
\begin{smallmatrix}
i & 0
\\
0 & -i
\end{smallmatrix}
\right))].$$
\end{enumerate}

Combining (1) and (2) we complete the Lemma.
\end{proof}

\begin{lm} \label{lm:N1_3}
The set $\{(g,h,khk^{-1},kgk^{-1})\in K^4 \mid [g,h]=-I,k^2=-I\}/K$ is homeomorphic to $\P^2(\bR)$.
\end{lm}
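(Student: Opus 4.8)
The plan is to cut the parameters $(g,h,k)$ down to a single essential parameter $k$ and then recognize the answer as the antipodal quotient of a sphere. Write $W$ for the set in the statement. First I would invoke the computation in the proof of Lemma~\ref{lm:N1_2}: the condition $[g,h]=-I$ forces $(g,h)$ to lie on a single $K$-orbit, namely the orbit of the standard pair $g_0=\left(\begin{smallmatrix} i & 0 \\ 0 & -i\end{smallmatrix}\right)$, $h_0=\left(\begin{smallmatrix} 0 & -1 \\ 1 & 0\end{smallmatrix}\right)$. Given a representative $(g,h,khk^{-1},kgk^{-1})$ of a class in $W$, I would choose $m\in K$ with $mgm^{-1}=g_0$ and $mhm^{-1}=h_0$ and conjugate the whole tuple by $m$. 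Setting $k'=mkm^{-1}$ (so that $k'^2=-I$ still holds since $k'^2=mk^2m^{-1}$), the tuple becomes $(g_0,h_0,k'h_0k'^{-1},k'g_0k'^{-1})$. Hence every class in $W$ admits a normal form $[g_0;h_0;kh_0k^{-1};kg_0k^{-1}]$ with $k^2=-I$.

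Next I would identify the parameter space. In $\SU(2)$ one has $k^2=-I$ if and only if $\tr(k)=0$, and the trace-zero elements $\left(\begin{smallmatrix} ci & z \\ -\bar z & -ci\end{smallmatrix}\right)$ with $c^2+|z|^2=1$ form a $2$-sphere $S^2$. Sending $k\mapsto[g_0;h_0;kh_0k^{-1};kg_0k^{-1}]$ then defines a continuous surjection $\phi:S^2\to W$. The crux is to compute the fibers of $\phi$: two parameters $k_1,k_2$ determine the same class exactly when some $n\in K$ conjugates one normal form to the other, which first forces $ng_0n^{-1}=g_0$ and $nh_0n^{-1}=h_0$. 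Because $[g_0,h_0]=-I\neq I$, the pair $(g_0,h_0)$ is irreducible, so $K_{g_0}\cap K_{h_0}=Z(K)=\{\pm I\}$ acts trivially by conjugation; the remaining equations then give $k_2^{-1}k_1\in K_{g_0}\cap K_{h_0}=\{\pm I\}$, i.e. $k_1=\pm k_2$. Conversely $-k$ obviously yields the same class, so $\phi$ identifies exactly antipodal points and descends to a continuous bijection $\bar\phi:S^2/\{\pm 1\}=\P^2(\bR)\to W$.

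Finally I would upgrade $\bar\phi$ to a homeomorphism: $\P^2(\bR)$ is compact and $W\subset\M$ is Hausdorff, so the continuous bijection $\bar\phi$ is automatically a homeomorphism, which proves the lemma. I expect the main obstacle to be the fiber computation of the previous paragraph, where one must verify that normalizing $(g,h)$ to $(g_0,h_0)$ introduces no identifications beyond $k\mapsto -k$; this rests entirely on the fact that the simultaneous centralizer of the irreducible pair $(g_0,h_0)$ is exactly the center $\{\pm I\}$, so that the residual freedom in the normalization is precisely $\{\pm I\}$. The passage from a set-theoretic bijection to a homeomorphism is then a formal compactness argument and should present no difficulty.
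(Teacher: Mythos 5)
Your proposal is correct and follows essentially the same route as the paper: reduce every class to the normal form $[g_0;h_0;kh_0k^{-1};kg_0k^{-1}]$ using the normalization from Lemma~\ref{lm:N1_2}, identify the parameter space $\{k : k^2=-I\}$ with the $2$-sphere of trace-zero elements, and show via $K_{g_0}\cap K_{h_0}=Z(K)=\{\pm I\}$ that the only residual identification is $k\mapsto -k$, yielding $S^2/\{\pm 1\}\cong\P^2(\bR)$. The only difference is cosmetic: you make explicit the final continuous-bijection-from-a-compact-space argument, which the paper leaves implicit.
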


\begin{proof}
Choose any $(g,h,khk^{-1},kgk^{-1})$ in this set. From the discussion before we know that the condition $[g,h]=-I$ implies that there exists $x\in K$ such that $xgx^{-1}=\left(
\begin{smallmatrix}
i & 0
\\
0 & -i
\end{smallmatrix}
\right), xhx^{-1}=\left(
\begin{smallmatrix}
0 & e^{it}
\\
-e^{-it} & 0
\end{smallmatrix}
\right),$  i.e. \begin{eqnarray*}
\lefteqn{
x\cdot(g,h,khk^{-1},kgk^{-1})=}\\
&& (\left(
\begin{smallmatrix}
i & 0
\\
0 & -i
\end{smallmatrix}
\right),\left(
\begin{smallmatrix}
0 & e^{it}
\\
-e^{-it} & 0
\end{smallmatrix}
\right),(xkx^{-1})\left(
\begin{smallmatrix}
0 & e^{it}
\\
-e^{-it} & 0
\end{smallmatrix}
\right)(xk^{-1}x^{-1}),(xkx^{-1})
\left(
\begin{smallmatrix}
i & 0
\\
0 & -i
\end{smallmatrix}
\right)(xk^{-1}x^{-1})).\end{eqnarray*}

Denote $k'=xkx^{-1}$. Again, there exists $y$ in the stabilizer of $\left(
\begin{smallmatrix}
i & 0
\\
0 & -i
\end{smallmatrix}
\right)$ such that
\begin{eqnarray*}
\lefteqn{y\cdot(\left(
\begin{smallmatrix}
i & 0
\\
0 & -i
\end{smallmatrix}
\right),\left(
\begin{smallmatrix}
0 & e^{it}
\\
-e^{-it} & 0
\end{smallmatrix}
\right),k'\left(
\begin{smallmatrix}
0 & e^{it}
\\
-e^{-it} & 0
\end{smallmatrix}
\right)k'^{-1},k'
\left(
\begin{smallmatrix}
i & 0
\\
0 & -i
\end{smallmatrix}
\right)k'^{-1})} \\
&&= (\left(
\begin{smallmatrix}
i & 0
\\
0 & -i
\end{smallmatrix}
\right),\left(
\begin{smallmatrix}
0 & -1
\\
1 & 0
\end{smallmatrix}
\right),(yk'y^{-1})\left(
\begin{smallmatrix}
0 & -1
\\
1 & 0
\end{smallmatrix}
\right)(yk'^{-1}y^{-1}),(yk'y^{-1})
\left(
\begin{smallmatrix}
i & 0
\\
0 & -i
\end{smallmatrix}
\right)(yk'^{-1}y^{-1})).
\end{eqnarray*}

In other words, any element in this set can be conjugated into the following form:
\[(\left(
\begin{smallmatrix}
i & 0
\\
0 & -i
\end{smallmatrix}
\right),\left(
\begin{smallmatrix}
0 & -1
\\
1 & 0
\end{smallmatrix}
\right),k''\left(
\begin{smallmatrix}
0 & -1
\\
1 & 0
\end{smallmatrix}
\right)(k'')^{-1},k''
\left(
\begin{smallmatrix}
i & 0
\\
0 & -i
\end{smallmatrix}
\right)(k'')^{-1})\]
for some $k''=yk'y^{-1}=yxkx^{-1}y^{-1}$, and since the condition that $k^2=-I$ is equivalent to the condition that $k$ is conjugate to $\left(
\begin{smallmatrix}
i & 0
\\
0 & -i
\end{smallmatrix}
\right)$, so $k''$ is conjugate to $\left(
\begin{smallmatrix}
i & 0
\\
0 & -i
\end{smallmatrix}
\right)$ also.

In fact, any two elements of the above form are conjugate iff their $k''$  differ by an element in the intersection of their stabilizers $K_{\left(
\begin{smallmatrix}
0 & -1
\\
1 & 0
\end{smallmatrix}
\right)}\cap K_{\left(
\begin{smallmatrix}
i & 0
\\
0 & -i
\end{smallmatrix}
\right)}=Z(K)=\{\pm I\}$. To show this, suppose that
\begin{eqnarray*}
\lefteqn{
z\cdot(\left(
\begin{smallmatrix}
i & 0
\\
0 & -i
\end{smallmatrix}
\right),\left(
\begin{smallmatrix}
0 & -1
\\
1 & 0
\end{smallmatrix}
\right),k''_1\left(
\begin{smallmatrix}
0 & -1
\\
1 & 0
\end{smallmatrix}
\right)(k''_1)^{-1},k''_1
\left(
\begin{smallmatrix}
i & 0
\\
0 & -i
\end{smallmatrix}
\right)(k''_1)^{-1}) }\\
&=&
 (\left(
\begin{smallmatrix}
i & 0
\\
0 & -i
\end{smallmatrix}
\right),\left(
\begin{smallmatrix}
0 & -1
\\
1 & 0
\end{smallmatrix}
\right),k''_2\left(
\begin{smallmatrix}
0 & -1
\\
1 & 0
\end{smallmatrix}
\right)(k''_2)^{-1},k''_2
\left(
\begin{smallmatrix}
i & 0
\\
0 & -i
\end{smallmatrix}
\right)(k''_2)^{-1})
\end{eqnarray*}
for some $z\in K$, then $z\in K_{\left(
\begin{smallmatrix}
0 & -1
\\
1 & 0
\end{smallmatrix}
\right)}\cap K_{\left(
\begin{smallmatrix}
i & 0
\\
0 & -i
\end{smallmatrix}
\right)}$ and $(k''_2)^{-1}z k''_1\in K_{\left(
\begin{smallmatrix}
0 & -1
\\
1 & 0
\end{smallmatrix}
\right)}\cap K_{\left(
\begin{smallmatrix}
i & 0
\\
0 & -i
\end{smallmatrix}
\right)}$. Thus $(k''_2)^{-1}k''_1\in K_{\left(
\begin{smallmatrix}
0 & -1
\\
1 & 0
\end{smallmatrix}
\right)}\cap K_{\left(
\begin{smallmatrix}
i & 0
\\
0 & -i
\end{smallmatrix}
\right)}=Z(K)=\{\pm I\}$, i.e. $k''_1=\pm k''_2$.
Thus, any two such elements of the above form are conjugate iff $k''_1=\pm k''_2$.
On the other hand, the conjugacy class of $\left(
\begin{smallmatrix}
i & 0
\\
0 & -i
\end{smallmatrix}
\right)$ is homeomorphic to $S^2$, we conclude $$\{(g,h,khk^{-},kgk^{-1})\in K^4\mid [g,h]=-I,k^2=-I\}/K\cong S^2/\pm \cong \P^2(\bR).$$

\end{proof}

Combining Lemma \ref{lm:N1_1}, Lemma \ref{lm:N1_2}, and Lemma \ref{lm:N1_3} we see what $\cN_1$ looks like. It has two pieces: one is the interior of the Goldman pillow $\{(g,h,h,g)\in K^4\}/K$, and the other is a piece that is in one to one correspondence with the first piece minus an interior point and at that missing point one attaches back an $\P^2(\bR)$.

Next, we will look at the set $\cN_2$. We will show that $\cN_2$ contains the surface of the Goldman pillow in $\cN_1$, the surface of the {\em one-point-blow-up} pillow in $\cN_1$, and
lines that connect  every point on these two surfaces, i.e. $\cN_2$ connects the two pieces in $\cN_1$. Thus, $\M^\sigma=\cN_1\cup\cN_2$ is path connected.

\begin{lm} \label{lm:N2}
For any fixed $(g,h)$, the subset $\{(g,h,khk^{-1},kgk^{-1})\in K^4\mid [g,h]=I,k^2=-I\}/K\cong S^2/S^1$ is a closed interval except for $(g,h)=(\pm I,\pm I)$ where the 4 intervals degenerate to 4 points. To be precise, for each fixed $(g,h)\neq (\pm I,\pm I)$, there exists an interval
$$ \fI_{(g,h)}:[0,1]\to \{(g,h,khk^{-1},kgk^{-1})\in K^4\mid [g,h]=I,k^2=-I\}/K$$
such that $\fI_{(g,h)}(0)$ belongs to the surface of the Goldman pillow and $\fI_{(g,h)}(1)$ belongs to the surface of the {\em one-point-blow-up} pillow, and $\fI_{(g,h)}(t)$ has no intersection with either surface for any $t\in (0,1)$. For $(g,h)=(\pm I, \pm I)$, the corresponding subset $\{(g,h,khk^{-1},kgk^{-1})\in K^4\mid [g,h]=I,k^2=-I\}/K$ is a point, in fact, they are the four vertices of both pillows.
\end{lm}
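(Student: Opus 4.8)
The plan is to fix the commuting pair $(g,h)$, put it in standard form, and show that the only remaining modulus carried by $k$ is a single real parameter. First I would pin down the common stabilizer $S:=K_g\cap K_h$. Since $[g,h]=I$, both $g,h$ lie in a common maximal torus, and conjugating I may take this to be the diagonal torus $T$ with Lie-algebra generator $\xi_0=\mathrm{diag}(i,-i)\in\su(2)$. A quick case check gives $S=T$ whenever $(g,h)\neq(\pm I,\pm I)$ (one of $g,h$ is non-central, which pins the torus) and $S=K$ in the four central cases. I also record that $\{k:k^2=-I\}$ is exactly the trace-zero locus, a $2$-sphere $S^2$, and that $k$ and $-k=k^{-1}$ produce literally the same $4$-tuple.

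Next I would compute the equivalence. Two parameters $k_1,k_2$ give the same class in $\M$ iff some $m\in K$ conjugates one tuple to the other; matching the fixed first two coordinates forces $m\in S=T$ and $k_2^{-1}mk_1\in T$, i.e.\ $k_2\in Tk_1T$. Writing the diagonal part of a trace-zero $k$ as $\mathrm{diag}(ai,-ai)$ with $a\in[-1,1]$, a direct bi-multiplication in $T$ shows that $Tk_1T\cap S^2$ is precisely the set of trace-zero elements with the same value of $|a|$: conjugation by $T$ fixes $a$ while rotating the off-diagonal phase, and the residual freedom also realizes the flip $a\mapsto-a$. Hence $|a|=\tfrac12|\tr(k\,\xi_0)|$, a conjugation-invariant quantity, is a complete invariant, and $k\mapsto|a|$ descends to a continuous bijection from the fixed-$(g,h)$ subset onto $[0,1]$. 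As the subset is compact and $\M$ is Hausdorff this is a homeomorphism, so the subset is a closed interval (this is the asserted $S^2/S^1$, up to the harmless $a\mapsto-a$ folding of $[-1,1]$ onto $[0,1]$). I then let $\fI_{(g,h)}$ be the inverse of this map, realized by any path of $k$'s running from a pole to the equator.

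For the endpoints and interior I would argue as follows. At $|a|=1$ one has $k\in T=K_g\cap K_h$, so $khk^{-1}=h$, $kgk^{-1}=g$ and the tuple is $(g,h,h,g)$, a point of the Goldman-pillow surface; conversely the tuple lies on the Goldman surface $\{(g',h',h',g')\}/K$ iff $k\in K_g\cap K_h$, i.e.\ iff $|a|=1$, so every interior point of the interval avoids the Goldman surface. At $|a|=0$ the element $k$ is equatorial and conjugation by $k$ inverts $T$, giving the tuple $(g,h,h^{-1},g^{-1})$, which I claim is the point of the one-point-blow-up surface. For $(g,h)=(\pm I,\pm I)$ every conjugation is trivial, the whole $S^2$ collapses, and the interval degenerates to the single corresponding vertex $[I;I;I;I]$, $[I;-I;-I;I]$, $[-I;I;I;-I]$, or $[-I;-I;-I;-I]$. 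When $(g,h)\neq(\pm I,\pm I)$ the two endpoints are distinct, since $(g,h,h,g)\sim(g,h,h^{-1},g^{-1})$ would force $h=h^{-1}$ and $g=g^{-1}$, i.e.\ both central; thus the interval is genuinely nondegenerate.

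The main obstacle is the precise identification of the equatorial endpoint $(g,h,h^{-1},g^{-1})$ with a point of the one-point-blow-up surface, because that surface is described only through the correspondence of Lemma~\ref{lm:N1_1} and a limiting process, not by an explicit equation. To settle it I would take a family $(g_s,h_s)\to(g,h)$ with $[g_s,h_s]\neq I$, use Lemma~\ref{lm:N1_1} to write the associated blow-up point with $k_s$ the order-$4$ element of the maximal torus stabilizing the commutator $[g_s,h_s]$, and compute the leading order of $[g_s,h_s]-I$ to show that this stabilizer torus converges to the torus transverse to $T$. Consequently $k_s$ limits to an equatorial element and the blow-up points limit to $(g,h,h^{-1},g^{-1})$, identifying the equatorial locus $\{|a|=0\}$ with the blow-up surface. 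Granting this last step, disjointness of the interval's interior from both surfaces is immediate from the $|a|$-parametrization, completing the proof.
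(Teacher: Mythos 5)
Your proposal is correct and follows essentially the same route as the paper's proof: diagonalize the commuting pair $(g,h)$, identify the parameter space $\{k : k^2=-I\}$ with the trace-zero sphere $S^2$, show that the residual conjugation freedom (forced into the common stabilizer $T$) collapses $S^2$ along latitude circles to a closed interval, and recognize the polar and equatorial classes as the endpoints $[g;h;h;g]$ and $[g;h;h^{-1};g^{-1}]$ lying on the Goldman pillow and the one-point-blow-up pillow respectively. You are in fact more careful than the paper at the one step it glosses over: the paper merely asserts that $[g;h;h^{-1};g^{-1}]$ belongs to the blow-up pillow's surface, whereas your leading-order computation of the axis of $[g_s,h_s]$ (which is indeed forced into the off-diagonal plane, so the order-$4$ elements $k_s$ limit to equatorial ones) actually justifies this membership and the disjointness of the interval's interior from that surface.
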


\begin{proof}
\begin{enumerate}
\item Case 1: Assume that $(g,h)\neq (\pm I,\pm I)$.

Let $(g,h,khk^{-1},kgk^{-1})$ be such that $[g,h]=I$, $k^2=-I$, so $g$, $h$, belong to the same maximal torus, say $T'$, then there exists $\epsilon\in K$ such that it conjugates $T'$ to the diagonal maximal torus $T$, i.e.
\begin{eqnarray*}
\lefteqn{\epsilon\cdot(g,h,khk^{-1},kgk^{-1})}\\
&&=( \left(
\begin{smallmatrix}
e^{i\theta} & 0
\\
0 & e^{-i\theta}
\end{smallmatrix}
\right),\left(
\begin{smallmatrix}
e^{is} & 0
\\
0 & e^{-is}
\end{smallmatrix}
\right),(\epsilon k\epsilon^{-1})\left(
\begin{smallmatrix}
e^{is} & 0
\\
0 & e^{-is}
\end{smallmatrix}
\right)(\epsilon k^{-1}\epsilon^{-1}),(\epsilon k\epsilon^{-1})
\left(
\begin{smallmatrix}
e^{i\theta} & 0
\\
0 & e^{-i\theta}
\end{smallmatrix}
\right)(\epsilon k^{-1}\epsilon^{-1}))
\end{eqnarray*}
where $(\epsilon k\epsilon^{-1})^2=\epsilon k^2 \epsilon^{-1}=-I$.
In order for $g$ and $h$ to stay in this $T$, one must only use conjugation in $N(T)$.
This shows that \begin{eqnarray*}
\lefteqn{\{(g,h,khk^{-1},kgk^{-1}\in K^4\mid [g,h]=I, k^2=-I\}/K=}\\
&& \{
(\left(
\begin{smallmatrix}
e^{i\theta} & 0
\\
0 & e^{-i\theta}
\end{smallmatrix}
\right),\left(
\begin{smallmatrix}
e^{is} & 0
\\
0 & e^{-is}
\end{smallmatrix}
\right),k\left(
\begin{smallmatrix}
e^{is} & 0
\\
0 & e^{-is}
\end{smallmatrix}
\right)k^{-1},k
\left(
\begin{smallmatrix}
e^{i\theta} & 0
\\
0 & e^{-i\theta}
\end{smallmatrix}
\right)k^{-1})
\mid k^2=-I\}/N(T).\end{eqnarray*}
Now suppose we have two elements of this form that are conjugate to each other, i.e.
\begin{eqnarray*}\lefteqn{(\left(
\begin{smallmatrix}
e^{i\theta} & 0
\\
0 & e^{-i\theta}
\end{smallmatrix}
\right),\left(
\begin{smallmatrix}
e^{is} & 0
\\
0 & e^{-is}
\end{smallmatrix}
\right),k_2\left(
\begin{smallmatrix}
e^{is} & 0
\\
0 & e^{-is}
\end{smallmatrix}
\right)k_2^{-1},k_2
\left(
\begin{smallmatrix}
e^{i\theta} & 0
\\
0 & e^{-i\theta}
\end{smallmatrix}
\right)k_2^{-1})}\\
&& =t\cdot
(\left(
\begin{smallmatrix}
e^{i\theta'} & 0
\\
0 & e^{-i\theta'}
\end{smallmatrix}
\right),\left(
\begin{smallmatrix}
e^{is'} & 0
\\
0 & e^{-is'}
\end{smallmatrix}
\right),k_1\left(
\begin{smallmatrix}
e^{is'} & 0
\\
0 & e^{-is'}
\end{smallmatrix}
\right)k_1^{-1},k_1
\left(
\begin{smallmatrix}
e^{i\theta'} & 0
\\
0 & e^{-i\theta'}
\end{smallmatrix}
\right)k_1^{-1})\end{eqnarray*}
for some $t\in N(T)$,
then this implies that $$k_2^{-1}tk_1t^{-1}\in K_{\left(
\begin{smallmatrix}
e^{i\theta} & 0
\\
0 & e^{-i\theta}
\end{smallmatrix}
\right)}\cap K_{\left(
\begin{smallmatrix}
e^{is} & 0
\\
0 & e^{-is}
\end{smallmatrix}
\right)}=\{\left(
\begin{smallmatrix}
e^{i\alpha} & 0
\\
0 & e^{-i\alpha}
\end{smallmatrix}
\right)\},$$
since at least one of $\left(
\begin{smallmatrix}
e^{i\theta} & 0
\\
0 & e^{-i\theta}
\end{smallmatrix}
\right)$, $\left(
\begin{smallmatrix}
e^{is} & 0
\\
0 & e^{-is}
\end{smallmatrix}
\right)$
is not $\pm I$.
Thus $k_2^{-1}tk_1t^{-1}=\left(
\begin{smallmatrix}
e^{i\alpha} & 0
\\
0 & e^{-i\alpha}
\end{smallmatrix}
\right)$ for some $\alpha$, and
$t^{-1}k_2^{-1}tk_1=t^{-1}\left(
\begin{smallmatrix}
e^{i\alpha} & 0
\\
0 & e^{-i\alpha}
\end{smallmatrix}
\right)t=\left(
\begin{smallmatrix}
e^{i\alpha} & 0
\\
0 & e^{-i\alpha}
\end{smallmatrix}
\right) \mbox{or} \left(
\begin{smallmatrix}
e^{-i\alpha} & 0
\\
0 & e^{i\alpha}
\end{smallmatrix}
\right)$ because $t\in N(T)$.

Since $k_1$, $k_2$ satisfy $k^2=-I$, we may assume
$k_1= \left(
\begin{smallmatrix}
0 & 1
\\
-1 & 0
\end{smallmatrix}
\right)$, $k_2=g\left(
\begin{smallmatrix}
0 & 1
\\
-1 & 0
\end{smallmatrix}
\right)g^{-1}$, and try to find $k_2$ that satisfies the condition
\begin{displaymath}
t^{-1}k_2^{-1}t= \left\{ \begin{array}{llll}
\left(
\begin{smallmatrix}
e^{i\alpha} & 0
\\
0 & e^{-i\alpha}
\end{smallmatrix}
\right)k_1^{-1} &=&\left(
\begin{smallmatrix}
0&-e^{i\alpha}
\\
e^{-i\alpha}&0
\end{smallmatrix}
\right) &\textrm{if $t\in T\cap N(T)$,}\\
\left(
\begin{smallmatrix}
e^{-i\alpha} & 0
\\
0 & e^{i\alpha}
\end{smallmatrix}
\right)k_1^{-1} &=&\left(
\begin{smallmatrix}
0&-e^{-i\alpha}
\\
e^{i\alpha}&0
\end{smallmatrix}
\right) &\textrm{otherwise.}
\end{array} \right.
\end{displaymath}

Thus, \begin{displaymath}
g\left(
\begin{smallmatrix}
0 & -1
\\
1 & 0
\end{smallmatrix}
\right) g^{-1}=k_2^{-1}= \left\{ \begin{array}{ll}
t\left(
\begin{smallmatrix}
0&-e^{i\alpha}
\\
e^{-i\alpha}&0
\end{smallmatrix}
\right)t^{-1} &\textrm{if $t\in T\cap N(T)$}\\
t\left(
\begin{smallmatrix}
0&-e^{-i\alpha}
\\
e^{i\alpha}&0
\end{smallmatrix}
\right)t^{-1} &\textrm{otherwise}
\end{array} \right.
\end{displaymath}

So the equation for $g$ can  indeed be solved: for example,
$g=
\left(
\begin{smallmatrix}
0&e^{it+\frac{i\alpha}{2}}
\\
-e^{-it-\frac{i\alpha}{2}}&0
\end{smallmatrix}
\right)$ if
$t=\left(
\begin{smallmatrix}
e^{it}&0
\\
0&e^{-it}
\end{smallmatrix}
\right)$, and $g=
\left(
\begin{smallmatrix}
0&e^{-it-\frac{i\alpha}{2}}
\\
-e^{it-\frac{i\alpha}{2}}&0
\end{smallmatrix}
\right)$ if
$t=\left(
\begin{smallmatrix}
0 & 1
\\
-1 & 0
\end{smallmatrix}
\right)\left(
\begin{smallmatrix}
e^{it}&0
\\
0&e^{-it}
\end{smallmatrix}
\right)$ or $\left(
\begin{smallmatrix}
0 & -1
\\
1 & 0
\end{smallmatrix}
\right)\left(
\begin{smallmatrix}
e^{it}&0
\\
0&e^{-it}
\end{smallmatrix}
\right)$.

Thus, if $k_1=\left(
\begin{smallmatrix}
0 & 1
\\
-1 & 0
\end{smallmatrix}
\right)$, then with any $k_2\in\{\left(
\begin{smallmatrix}
0 & e^{i\beta}
\\
-e^{-i\beta} & 0
\end{smallmatrix}
\right)\}$ we will have two conjugate representations defined using $k_1$ and $k_2$ respectively.
In other words, for any fixed pair $(g,h)=
(\left(
\begin{smallmatrix}
 e^{i\theta} &0
\\
0&e^{-i\theta}
\end{smallmatrix}
\right), \left(
\begin{smallmatrix}
 e^{is} &0
\\
0&e^{-is}
\end{smallmatrix}
\right))\neq(\pm I, \pm I)$, the set
\begin{eqnarray*}
&\{(\left(
\begin{smallmatrix}
 e^{i\theta} &0
\\
0&e^{-i\theta}
\end{smallmatrix}
\right), \left(
\begin{smallmatrix}
 e^{is} &0
\\
0&e^{-is}
\end{smallmatrix}
\right),k\left(
\begin{smallmatrix}
 e^{is} &0
\\
0&e^{-is}
\end{smallmatrix}
\right)k^{-1},k\left(
\begin{smallmatrix}
 e^{i\theta} &0
\\
0&e^{-i\theta}
\end{smallmatrix}
\right)k^{-1})\mid k^2=-I\}/N(T)& \\
& \cong \{k^2=-I\}/\{\left(
\begin{smallmatrix}
0 & e^{i\beta}
\\
-e^{-i\beta} & 0
\end{smallmatrix}
\right)\}\cong S^2/ S^1&\end{eqnarray*} is a closed interval,
and its two endpoints are $[\left(
\begin{smallmatrix}
 e^{i\theta} &0
\\
0&e^{-i\theta}
\end{smallmatrix}
\right); \left(
\begin{smallmatrix}
 e^{is} &0
\\
0&e^{-is}
\end{smallmatrix}
\right);\left(
\begin{smallmatrix}
 e^{is} &0
\\
0&e^{-is}
\end{smallmatrix}
\right);\left(
\begin{smallmatrix}
 e^{i\theta} &0
\\
0&e^{-i\theta}
\end{smallmatrix}
\right)]$ and $[\left(
\begin{smallmatrix}
 e^{i\theta} &0
\\
0&e^{-i\theta}
\end{smallmatrix}
\right); \left(
\begin{smallmatrix}
 e^{is} &0
\\
0&e^{-is}
\end{smallmatrix}
\right);\left(
\begin{smallmatrix}
 e^{-is} &0
\\
0&e^{is}
\end{smallmatrix}
\right);\left(
\begin{smallmatrix}
 e^{-i\theta} &0
\\
0&e^{i\theta}
\end{smallmatrix}
\right)]$, which belong to the surfaces of the two pieces of $\cN_1$ respectively.

\item Case 2: Assume that $(g,h)=(\pm I,\pm I)$.

For any fixed $(g,h)=(\pm I, \pm I)$, the corresponding subset $\{(g,h,khk^{-1},kgk^{-1})\in K^4\mid [g,h]=I, k^2=-1\}/K$ degenerates to just one of the four points $[I;I;I;I]$, $[I;-I;-I;I]$, $[-I;I;I;-I]$, $[-I;-I;-I;-I]$.
\end{enumerate}
\end{proof}

Now Theorem \ref{thm:RP3} follows from the above four lemmas which describes the set $\cN_1$ and $\cN_2$ completely:
\begin{proof}[Proof of Theorem \ref{thm:RP3}]
The above four lemmas show that (1) $\cN_1$ contains two components. One is topologically the interior of an three dimensional ball $B^3$ (the Goldman pillow). The other piece is the interior of another three dimensional ball with one interior point blown up (remove one interior point and attach back a $\P^2(\bR)$). (2) $\cN_2$ contains the surfaces of the two balls of $\cN_1$ and lines that connect all the points on one surface to the other. Thus, $\M^\sigma=\cN_1\cup\cN_2$ is topologically $\P^3(\bR)$.
\end{proof}
\begin{rem}
Points in $\cN_1$ are all irreducible representations, i.e.  $\cN_1 \subset \M^i$, while $\cN_2\cap \cM^i\neq \emptyset$ and $\cN_2\cap(\cM\setminus\cM^i)\neq\emptyset$. To be precise,
abelian representations in $\cN_2$, in other words $\cN_2\cap(\cM\setminus\cM^i)$, are the two surfaces of the two topological balls $B^3$ (the pillows) described above, and irreducible representations in $\cN_2$ are all the open intervals $\fI_{(g,h)}((0,1))$, $(g,h)\neq (\pm I,\pm I)$ defined in Lemma \ref{lm:N2}.
\end{rem}

\end{document}